\newtheorem{problem}{Problem}
\title{Adversarial Physics-Informed Machine Learning for Robust Optimal Safe Predefined-Time Stabilization: A Game-Theoretic Approach
\thanks{Submitted to the editors \today.}
\funding{The work of N.-M.T.K and R.M. was partially supported by US DoT Safety21 National University Transportation Center and NSF under Grant CISE-$2431569$. The work of S.L., J.D., and G.E.K. was partially supported by Laboratory University Collaboration Initiative (LUCI) sponsored by the Basic Research Office, Office of Under Secretary of Defense for Research and Engineering (OUSD R\&E).}}
\author{Nick-Marios T. Kokolakis\footnotemark[2] \footnotemark[4] \ \and Shanqing Liu\footnotemark[3]\ \and Jérôme Darbon\footnotemark[3]\ \and Rahul Mangharam\footnotemark[2]\ \and George Em Karniadakis\footnotemark[3]  }
\begin{document}

\maketitle
\renewcommand{\thefootnote}{\fnsymbol{footnote}}

\footnotetext[2]{Department of Electrical and Systems Engineering, University of Pennsylvania, Philadelphia, PA $19104$ USA (\{nmkoko,  rahulm\}@seas.upenn.edu). }

\footnotetext[3]{Division of Applied Mathematics, Brown University, Providence, RI $02912$ USA (\{shanqing\_liu, jerome\_darbon, george\_karniadakis\}@brown.edu). }

\footnotetext[4]{Corresponding author.}

\begin{abstract}
We develop a game-theoretic framework for adversarially robust optimal safe prede-\ fined-time stabilization of parameter-dependent nonlinear dynamical systems with nonquadratic cost functionals. Our approach ensures that all system trajectories remain within a specified admissible set and converge to equilibrium in a predefined time despite adversarial disturbances. The control problem is formulated as a two-player zero-sum differential game, where the controller is a minimizing player and the adversary a maximizing player. We derive sufficient conditions for the existence of a saddle-point solution and safe predefined-time stability using a barrier Lyapunov function that satisfies a differential inequality and the steady-state Hamilton-Jacobi-Isaacs (HJI) equation. To address the analytical intractability of solving the HJI equation, we introduce a physics-informed learning algorithm that robustly learns the Nash safely predefined-time stabilizing control strategy. Simulation results demonstrate the efficacy and resilience of the proposed method in ensuring robust optimal safe predefined-time stabilization under adversarial disturbances.
\end{abstract}

\begin{keywords}
    Safety-critical control, predefined-time stability, robust optimal feedback control, differential games, physics-informed neural networks.
\end{keywords}
\begin{AMS}
  68T07, 	49L12, 	93D40.
\end{AMS}


\section{Introduction}
In control systems engineering, \textit{autonomy} pertains to controlled systems that can operate without human intervention. Systems with this capability are termed autonomous systems (ASs), including, but not limited to, self-driving cars, humanoid robots, and unmanned aerial vehicles \cite{vamvoudakis2020synchronous}. However, numerous incidents of unintended AS crashes highlight that ASs are \textit{safety-critical systems}. Hence, guaranteeing safety is crucial, yielding the emergence of \textit{safe autonomy}~\cite{xiao2023safe}. The
control systems community can enable safe autonomy
by harnessing the advantages of \textit{nonlinear control theory} \cite{haddad2011nonlinear}, \textit{optimal control theory} \cite{liberzon2011calculus}, \textit{robust control theory} \cite{bacsar2008h}, and \textit{game theory} \cite{bacsar1998dynamic} to equip ASs with control architectures that ensure safety, stability, robustness, and performance. However, enabling safe autonomy becomes considerably more challenging in the presence of adversarial disturbances, which may arise from cyber-physical attacks, and hence necessitating the design of strategic decision-making mechanisms that synthesize adversarially robust optimal safe policies guaranteeing disturbance rejection in a predefined time rather than in an infinite or finite time. By incorporating \textit{predefined-time adversarial robustness} into the safe control design, ASs can achieve enhanced resilience and reliability in real-world applications.

 \textit{Stability theory} concerns the behavior of the system trajectories of a dynamical system when the system initial state is near an equilibrium state \cite{haddad2011nonlinear}. The notion of \textit{ asymptotic stability} in dynamical systems allows the convergence of system trajectories to a Lyapunov stable equilibrium point over the infinite horizon \cite{haddad2011nonlinear}. In contrast, the concept of \textit{finite-time stability} enables the convergence of the system solutions to a Lyapunov stable equilibrium state in finite-time \cite{bhat2000finite}. An inherent drawback of finite-time stability is that the settling-time function is not uniformly bounded, and hence the time of convergence to the equilibrium point may increase (possibly unboundedly) as the vector norm of the initial condition increases. The stronger notion of \textit{fixed-time stability} ensures the convergence of the system trajectories to a finite-time stable equilibrium point in a fixed-time involving a uniformly bounded settling-time function~\cite{polyakov2011nonlinear}. A key limitation of fixed-time stability is that the upper bound of the settling-time function may not necessarily be predefined. Alternatively, the concept of \textit{predefined-time stability} \cite{sanchez2018class,jimenez2020lyapunov} involves fixed-time stable \textit{parameter-dependent} dynamical systems whose upper bound of the settling-time function can be chosen via an appropriate selection of the system parameters. 
 
 \textit{Optimal control theory} concerns the synthesis of a control law for a given dynamical system to optimize a user-defined cost functional \cite{liberzon2011calculus}. The notions of \textit{optimality} and \textit{stability} are intertwined in \textit{optimal feedback stabilization} control problems that involve the synthesis of feedback control laws that guarantee closed-loop system stability while optimizing a given cost functional. The problems of optimal asymptotic, finite-time, fixed-time, and predefined-time stabilization are addressed in \cite{bernstein1993nonquadratic,haddad2015finite, kokolakis2023fixed,jimenez2017optimal}. Although these optimal control frameworks provide stability and performance guarantees, they do not account for adversarial disturbances, that is, strategic inputs designed to degrade performance or destabilize the system. Alternatively, \textit{differential game theory} analyzes strategic interactions between competing agents, providing a powerful framework for addressing robust optimal control problems. Specifically, the disturbance rejection control problem in $\mathcal{H}_\infty$ control theory \cite{ball1989h,van1993nonlinear,bacsar2008h} can be formulated as a \textit{two-player zero-sum game} wherein the controller is a minimizing player and the disturbance is a maximizing player. Infinite-horizon zero-sum games for linear and nonlinear
dynamical systems with quadratic and nonquadratic cost functionals are studied in \cite{jacobson1977values,mageirou1976values,limebeer1992game,l2016differential,l2017differentialCTA,l2017differentialJFI,l2017differential}. In particular, sufficient conditions are provided to characterize a saddle point solution for the game and closed-loop system asymptotic, partial-state asymptotic,  and partial-state finite-time stability in \cite{l2016differential,l2017differentialCTA,l2017differentialJFI}. In light of the above, although these game-theoretic control frameworks establish stability and Nash equilibrium, predefined-time stability is \textit{not} ensured, and safety is \textit{not} a design consideration.

\textit{Safe control theory} concerns the controller analysis and synthesis for a safety critical dynamical system to guarantee the satisfaction of safety specifications \cite{cohen2023adaptive}, which can be expressed as forward invariance \cite{haddad2011nonlinear} of a set of safe system states. \textit{Control barrier functions} (CBF) have been commonly used to guarantee the safety of a control system by rendering a safe set forward invariant \cite{wieland2007constructive,ames2016control,ames2019control}. The problem of \textit{asymptotic} stabilization with guaranteed safety is addressed in \cite{romdlony2016stabilization} by merging a control Lyapunov function (CLF) \cite{sontag1989universal} and a CBF \cite{wieland2007constructive}. However, a mapping that is both a CBF and a CLF
cannot exist, as shown in \cite{braun2020comment}. Alternatively, quadratic programming (QP) has been utilized to combine a CLF and a CBF to synthesize controllers for safe \textit{asymptotic} stabilization of nonlinear systems \cite{ren2022razumikhin,mestres2022optimization}. A key limitation of these frameworks is the lack of robustness guarantees. In contrast, building on the results of \cite{xu2015robustness}, the notion of robust CBF is introduced in \cite{jankovic2018robust} to construct controllers for nonlinear systems with exogenous disturbances that guarantee safety and input-to-state
stability \cite{haddad2011nonlinear}. However, CBF-based QPs introduce undesirable \textit{asymptotically} stable equilibria \cite{reis2020control},  do not optimize closed-loop system performance in the face of a worst-case adversary, and do not enforce time constraints. To the best of our knowledge, a game-theoretic control framework that \textit{simultaneously} ensures \textit{safety}, \textit{predefined-time stability}, \textit{optimality}, and \textit{robustness} is absent from the literature. 

To synthesize the Nash equilibrium strategies for a zero-sum differential game formulating a robust optimal stabilization problem, it is first necessary to determine the value of the game, which is a \textit{stabilizing} solution of a nonlinear partial differential equation, the steady-state
Hamilton–Jacobi–Isaacs (HJI) equation. Solving the steady-state HJI equation is challenging, except for special cases \cite{lewis2012optimal}.  \textit{Physics-informed neural networks} (PINNs), first developed in \cite{raissi2019physics}, have demonstrated their effectiveness in approximating a stabilizing solution to the steady-state Hamilton–Jacobi–Bellman (HJB) equation related to an \textit{optimal stabilization problem }\cite{furfaro2022physics,fotiadis2023physics,kokolakis2025safe}. However, using PINNs
to solve a zero-sum differential game has \textit{not} been explored. To the best of our knowledge, the literature lacks a game-theoretic physics-informed learning framework that approximates the unique stabilizing solution to the steady-state HJI.

\paragraph*{Contributions} The contributions of this paper are fourfold.
First, we address an adversarially robust optimal safe predefined-time stabilization problem formulated as a two-player zero-sum differential game. Second, sufficient conditions for the existence of a saddle-point solution to the zero-sum game and closed-loop system safe predefined time stability are derived in terms of a barrier Lyapunov function. Third, a robust \textit{inverse} optimal safe predefined time stabilization problem is addressed. Finally, an \textit{adversarial physics-informed learning} algorithm is developed to learn the solution to the robust optimal safe predefined-time stabilization problem. A block diagram of the proposed adversarial PINN control framework is shown in Fig~\ref{fig:sketch}.
\begin{figure}
    \centering
    \includegraphics[width=0.75\linewidth]{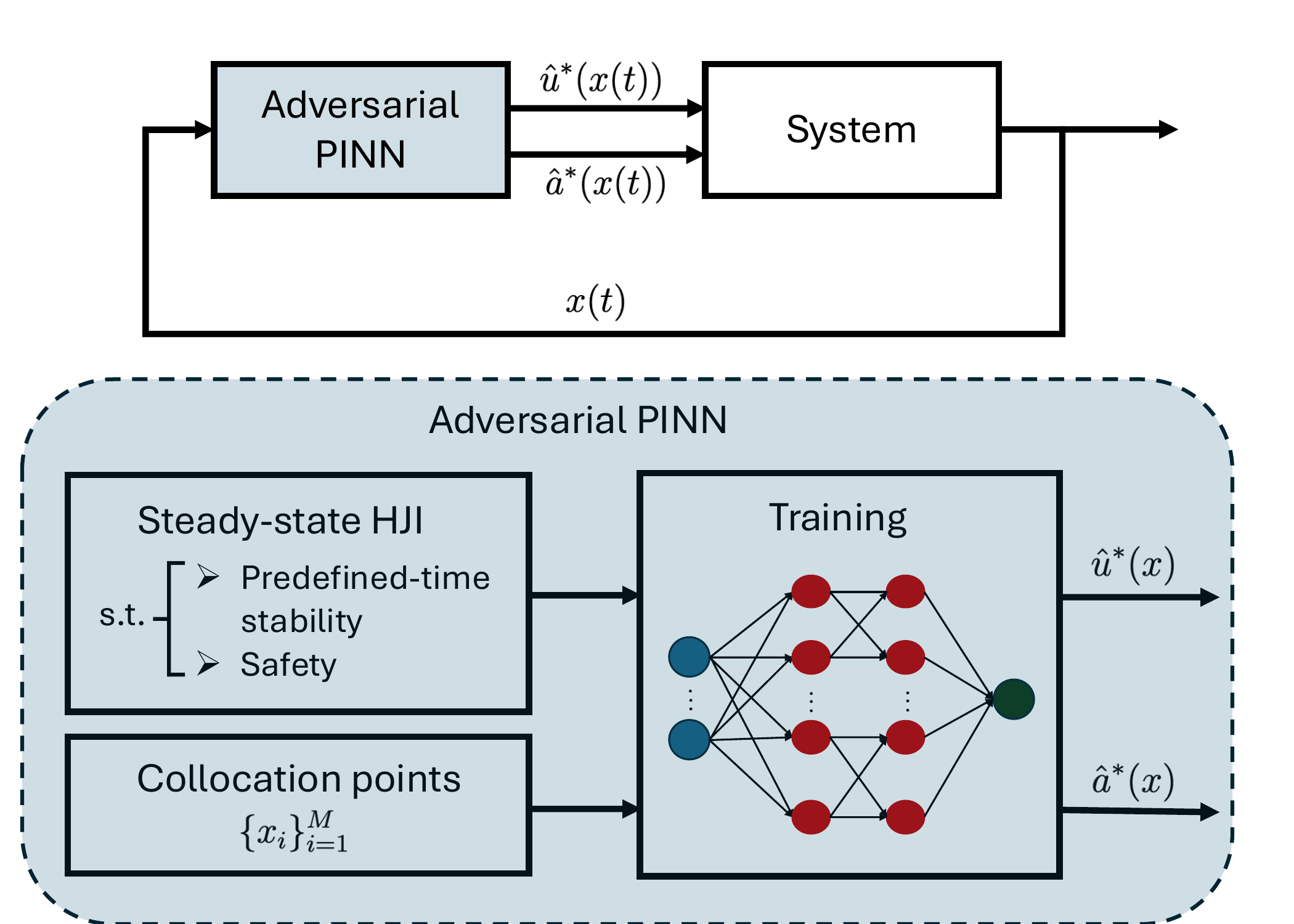}
    \caption{Block diagram of the adversarial PINN control framework for robust optimal safe predefined-time stabilization. The adversarial PINN is trained \textit{offline} using a set of collocation points to solve the steady-state HJI equation subject to predefined-time stability and safety constraints. The learned Nash control strategy $\hat{u}^\star(\cdot)$ is then applied in feedback to ensure robust optimal safe predefined-time stabilization under the adversarial disturbance $\hat{a}^\star(\cdot)$.}
    \label{fig:sketch}
\end{figure}
\paragraph*{Structure} The remainder of the paper is structured as follows. The notion of safe predefined-time stability for general parameter-dependent nonlinear dynamical systems is introduced in Section \ref{sec:Safe Predefined-Time Stability}, while Section \ref{sec: robust optimal safe predefined-time stabilization problem} states the robust optimal safe predefined-time stabilization problem. Section \ref{sec: optimal and inverse optimal affine} introduces the robust optimal and inverse optimal safe predefined-time stabilization problem for parameter-dependent nonlinear \textit{affine} dynamical systems. In Section \ref{sec:PINN}, an adversarial physics-informed learning algorithm is developed to learn the solution to the robust optimal safe predefined-time stabilization problem. Section \ref{sec:Simulation Results} presents illustrative numerical examples. Finally, conclusions are provided in Section \ref{sec:Future work}.

\paragraph*{Notation}
The notation used in this paper is standard. Specifically, $\left\|\cdot\right\|_p\triangleq\left[\sum_{i=1}^{n}\left|x_{i}\right|^{p}\right]^{1 / p},\  1$ $\leq  p <\infty,$ denotes the $\ell^p$-norm of a vector. We interchangeably use the notation $V^{\prime}(x)$ and $V_x(x)$ to denote the gradient of a scalar-valued function $V(x)$ with respect to a vector-valued variable $x$, which is defined as a row vector. The signum function $\operatorname{sgn}: \mathbb{R} \rightarrow\{-1,0,1\}$ is defined as $\operatorname{sgn} \left(x\right) \triangleq x /|x|,\ x \neq 0$, and $\operatorname{sgn}(0) \triangleq 0$. We define the gamma function $\Gamma(\cdot)$ as $\Gamma(x) \triangleq  \int_0^{\infty} e^{-t} t^{x-1} \mathrm{d} t,\ x>0$. The indicator function of a set $A \subseteq \mathbb{R}^{n}$ is the function $ \mathbbm{1}_A: \mathbb{R}^{n} \rightarrow \{0,1\}$ defined by $ \mathbbm{1}_A(x) \triangleq 1,\ x \in A$, and  $ \mathbbm{1}_A(x) \triangleq 0,\ x \notin A$. Let $\lceil\cdot\rfloor^{\eta}\triangleq|\cdot|^{\eta} \operatorname{sgn}(\cdot)$, where $|\cdot|$ and $\operatorname{sgn}(\cdot)$ operate component-wise and $\eta>0$. The distance of a point $x_{0} \in \mathbb{R}^{n}$ to a closed set $C \subseteq \mathbb{R}^{n}$ in the norm $\|\cdot\|$ is defined as $ \operatorname{dist}\left(x_{0}, C\right)\triangleq\inf_{x \in C} \left\{\left\|x_{0}-x\right\| \right\}$. Finally, the notation $\partial \mathcal{S}$  denotes the boundary of the set $\mathcal{S}$.

 \section{Safe Predefined-Time Stability} \label{sec:Safe Predefined-Time Stability}
In this section, we introduce the concept of \textit{safe predefined-time stability} to characterize a class of nonlinear dynamical systems with the property that every trajectory starting in a given set of admissible states containing an equilibrium point remains in the set of admissible states and converges to the equilibrium point in a predefined time. Furthermore, we give sufficient conditions for safe predefined-time stability in terms of a barrier Lyapunov function.

To introduce the notions of \textit{finite-} and \textit{fixed-time stability}, consider the \textit{parameter-independent} nonlinear dynamical system given by
\begin{equation} \label{eqn_x_aynamics_controlled  ind}
\dot{x}(t) = f(x(t)),\quad x(0) = x_0,\quad t \geq 0, 
\end{equation}
where $x(t) \in \mathcal{D} \subseteq \mathbb{R}^n,\ t \geq 0,$ is a system state vector,
$\mathcal{D}$ is an open set with $0 \in \mathcal{D}$, $f : \mathcal{D}  \to \mathbb{R}^n$
is continuous on $\mathcal{D}$ and  satisfies $f(0) = 0$.

\begin{definition} [\cite{bhat2000finite}]
The zero solution $x(t)\equiv 0$ to \eqref{eqn_x_aynamics_controlled  ind} is \emph{finite-time stable} if it is Lyapunov stable and finite-time convergent, i.e., for all $x(0) \in \mathcal{N} \backslash\{0\}$, where $\mathcal{N}\subseteq \mathcal{D}$ is an open neighborhood of the origin, $\lim _{t \rightarrow T(x(0))} x(t)=0$, where $T(\cdot)$ is the settling-time function such that $T(x(0))<\infty,\  x(0) \in \mathcal{N}$. The zero solution $x(t)\equiv 0$ to \eqref{eqn_x_aynamics_controlled  ind} is \emph{globally finite-time stable} if it is finite-time stable with $\mathcal{N}=\mathcal{D}=\mathbb{R}^{n}$.\frqed
\end{definition}

\begin{definition} [\cite{polyakov2011nonlinear}]
The zero solution $x(t)\equiv 0$ to \eqref{eqn_x_aynamics_controlled  ind} is \emph{fixed-time stable} if it is finite-time stable and the settling-time function $T(\cdot)$ is uniformly bounded, i.e., there exists $T_{\max }>0$ such that $T\left(x(0)\right) \leq T_{\max },\ x(0) \in \mathcal{N}$. The zero solution $x(t)\equiv 0$ to \eqref{eqn_x_aynamics_controlled  ind} is \emph{globally fixed-time stable} if it is fixed-time stable with $\mathcal{N}=\mathcal{D}=\mathbb{R}^{n}$.\frqed
\end{definition}
The main difference between finite- and fixed-time stability is that in finite-time stability the settling-time function is not necessarily uniformly bounded, whereas fixed-time stability involves a uniformly bounded settling-time function.
 
Alternatively, to introduce the concept of \textit{predefined-time stability}, consider the \textit{parameter-dependent} nonlinear dynamical system given by
\begin{equation} \label{eqn_x_aynamics_controlled  undist}
\dot{x}(t) = f(x(t),\theta),\quad x(0) = x_0,\quad t \geq 0, 
\end{equation}
where, for every $t \geq 0$, $x(t) \in \mathcal{D} \subseteq \mathbb{R}^n$ is a system state vector,
$\mathcal{D}$ is an open set with $0 \in \mathcal{D}$, $\theta \in \mathbb{R}^N$ is a system parameter vector, $f : \mathcal{D}  \times \mathbb{R}^{N} \to \mathbb{R}^n$
 is such that $f(\cdot,\theta)$ is continuous on $\mathcal{D}$ for all $\theta \in \mathbb{R}^{N}$ and $f(0,\cdot) = 0$. We write $s\left(t, x_0, \theta \right),\ t \geq 0$, to denote the solution to \eqref{eqn_x_aynamics_controlled  undist} with initial condition $x_0$ and system parameter $\theta$.

\begin{definition}[\cite{jimenez2020lyapunov}]
The zero solution $x(t)\equiv 0$ to \eqref{eqn_x_aynamics_controlled  undist} is \emph{predefined-time stable} with a predefined time $T_{\mathrm{p}}>0$ if 
there exists a system parameter vector $\theta \in \mathbb{R}^N$ such that the zero solution $x(t)\equiv 0$ to \eqref{eqn_x_aynamics_controlled  undist} is fixed-time stable with the settling-time function $T(\cdot,\theta)$ being uniformly bounded by $T_{\mathrm{p}}$, i.e., $T\left(x(0),\theta \right) \leq T_{\mathrm{p}},\ x(0) \in \mathcal{N}$. The zero solution $x(t)\equiv 0$ to \eqref{eqn_x_aynamics_controlled  undist} is \emph{globally predefined-time stable} with a predefined time $T_{\mathrm{p}}>0$ if it is predefined stable with a predefined time $T_{\mathrm{p}}>0$ with $\mathcal{N}=\mathcal{D}=\mathbb{R}^{n}$.\frqed
\end{definition}

\begin{remark}
    Note that the system parameter vector $\theta$ implicitly depends on the predefined time $T_{\mathrm{p}}$. \frqed
\end{remark}

The key difference between fixed- and predefined-time stability is that in  pre-\ defined-time stability the upper bound of the settling-time function is defined a priori as an explicit function of the system parameters, whereas in fixed-time stability it is not necessarily predefined. It is important to note that fixed-time stability concerns the stability of a parameter-independent nonlinear system, unlike predefined-time stability, which involves parameter-dependent nonlinear systems. Lyapunov theorems for finite-, fixed-, and predefined-time stability are provided in \cite{bhat2000finite,polyakov2011nonlinear,jimenez2020lyapunov}.

A  set of \textit{admissible states} $\mathcal{S} \subset \mathcal{D}$ is called a \textit{safe set} with respect to the parameter-dependent nonlinear dynamical system \eqref{eqn_x_aynamics_controlled  undist} if there exists  a system parameter  $\theta \in \mathbb{R}^{N}$ such that, for every $x_0 \in \mathcal{S}$, the solution $s\left(t, x_0, \theta\right),\ t \geq 0$, to \eqref{eqn_x_aynamics_controlled  undist} satisfies $\operatorname{dist}\left(s\left(t, x_0, \theta\right), \mathcal{S}\right)\equiv 0$. Equivalently, the set $\mathcal{S}$ is safe if there exists a system parameter vector such that $\mathcal{S}$  is positively invariant with respect to \eqref{eqn_x_aynamics_controlled  undist}.

The next definition introduces the concept of \textit{safe predefined-time stability}.
\begin{definition} [\cite{kokolakis2024safe}]
 Let $\mathcal{S} \subset \mathcal{D}$ be a set of admissible states with $0 \in \mathcal{S}$ and let $T_{\mathrm{p}} >0$ be a predefined time. The zero solution $x(t)\equiv 0$ to \eqref{eqn_x_aynamics_controlled  undist} is \emph{safely predefined-time stable} with predefined time $T_{\mathrm{p}}$ with respect to the set of admissible states $\mathcal{S}$  if there exists  a system parameter $\theta \in \mathbb{R}^{N}$ such that, for every $x_0 \in \mathcal{S}$, the solution $s\left(t, x_0, \theta \right),\ t \geq 0$, to \eqref{eqn_x_aynamics_controlled  undist} satisfies $\operatorname{dist}\left(s\left(t, x_0, \theta \right), \mathcal{S}\right)\equiv 0$ and $s\left(t, x_0, \theta \right)=0$ for all $t \geq T_{\mathrm{p}}$.\frqed
\end{definition}

\begin{remark}
Note that safe predefined-time stability unifies safety and predefined-time stability since positive invariance of the set of admissible states and predefined-time stability of the origin are established. Furthermore, note that the system parameter vector $\theta$ implicitly depends on the predefined time $T_{\mathrm{p}}$ and the set of admissible states $\mathcal{S}$. \frqed
\end{remark}

To present the main result of this section, the key definition of a \textit{coercive} function is needed.
\begin{definition} [\cite{bertsekasnonlinear}]
    Let $\mathcal{S} \subseteq \mathbb{R}^n $ be an unbounded set. A function $V: \mathcal{S} \rightarrow \mathbb{R}$ is called \emph{coercive} if, for every sequence $\left\{x_k\right\} _{k=1}^{\infty}$ in $\mathcal{S} $ such that $\lim _{k \rightarrow \infty} \left\|x_k\right\| = \infty$, we have $\lim _{k \rightarrow \infty} V\left(x_k\right)=\infty$.\frqed
\end{definition}

The following key theorem gives sufficient conditions for safe predefined time stability of a parameter-dependent nonlinear dynamical system.
\begin{theorem} [\cite{kokolakis2025safe}]  \label{thm: Lyapunov safe predefined time stability}Consider the  parameter-dependent nonlinear dynamical system  \eqref{eqn_x_aynamics_controlled  undist}. Let  $\mathcal{S} \subset \mathcal{D}$ be a set of admissible states with $0 \in \mathcal{S}$ and let $T_{\mathrm{p}} >0$ be a predefined time. Assume that there exist a continuously differentiable function $V: \mathcal{S} \rightarrow \mathbb{R}$, a system parameter vector $\theta \in \mathbb{R}^{N}$, and real numbers $\alpha,\ \beta,\ p,\ q,\ k >0$ such that $p k<1,\ q k>1$, and 
\begin{align}
 V(0)&=0, \label{eq:them 1 v1}\\
 V(x)&>0, \quad x \in \mathcal{S}\backslash\{0\},\label{eq:them 1 v2}\\
 V(x) & \rightarrow \infty \text { as } x \rightarrow \partial \mathcal{S},\label{eq:them 1 v3}\\
\hspace*{-0.2cm} V^{\prime}(x) f(x,\theta) &\leq -\frac{\gamma}{T_{\mathrm{p}}}\Big(\alpha V^p(x)+\beta V^q(x)\Big)^r, \quad x \in \mathcal{S} \label{eq:them 1 vdot},
\end{align}
where 
\begin{align*}
\gamma \triangleq \frac{\Gamma\left(\frac{1-k p}{q-p}\right) \Gamma\left(\frac{k q-1}{q-p}\right)}{\alpha^k \Gamma(k)(q-p)}\left(\frac{\alpha}{\beta}\right)^{\frac{1-k p}{q-p}}.
\end{align*}
 If either $\mathcal{S}$ is bounded or both  $\mathcal{S}$ is unbounded and $V(\cdot)$ is coercive, then the zero solution $x(t) \equiv 0$ to \eqref{eqn_x_aynamics_controlled  undist}  is safely predefined time stable with predefined time $T_{\mathrm{p}}$ with respect to the set of admissible states $\mathcal{S}$.
\end{theorem}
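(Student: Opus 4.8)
The plan is to establish the conclusion in two stages: first, positive invariance of the admissible set $\mathcal{S}$ together with forward completeness of the trajectories of \eqref{eqn_x_aynamics_controlled  undist}, which delivers the safety statement $\operatorname{dist}(s(t,x_0,\theta),\mathcal{S})\equiv 0$; and second, a scalar differential-inequality estimate that converts \eqref{eq:them 1 vdot} into convergence of the origin within the predefined time $T_{\mathrm{p}}$. Fix the parameter vector $\theta$ furnished by the hypotheses and an arbitrary $x_0\in\mathcal{S}$; write $s(t)\triangleq s(t,x_0,\theta)$ on its maximal forward interval of existence $[0,t_{\max})$ (well defined since $f(\cdot,\theta)$ is continuous on $\mathcal{D}$), set $c\triangleq V(x_0)$, and let $\Omega_c\triangleq\{x\in\mathcal{S}:V(x)\le c\}$ denote the corresponding sublevel set.

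For the first stage I would note that, as long as $s(t)\in\mathcal{S}$, inequality \eqref{eq:them 1 vdot} gives $\tfrac{\mathrm{d}}{\mathrm{d}t}V(s(t))\le 0$ (the right-hand side is $\le 0$ because $V\ge 0$ on $\mathcal{S}$ by \eqref{eq:them 1 v1}--\eqref{eq:them 1 v2} and $\gamma,\alpha,\beta,T_{\mathrm{p}}>0$), so $s(t)$ cannot leave $\Omega_c$. The geometric crux is then to prove that $\overline{\Omega_c}$, the closure in $\mathbb{R}^n$, is compact and contained in the interior of $\mathcal{S}$: it is bounded because either $\mathcal{S}$ is bounded or, in the unbounded case, coercivity of $V$ forbids a sequence in $\Omega_c$ from escaping to infinity while keeping $V\le c$; and it cannot accumulate on $\partial\mathcal{S}$, since by the barrier property \eqref{eq:them 1 v3} that would force $V\to\infty$ along an approaching sequence in $\Omega_c$, again contradicting $V\le c$. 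A standard continuation argument then upgrades ``$s(t)\in\Omega_c$ while the trajectory is in $\mathcal{S}$'' to ``$s(t)\in\Omega_c$ for all $t\in[0,t_{\max})$'': at the putative exit time from $\mathcal{S}$ the trajectory would lie in $\overline{\Omega_c}$, which is interior to $\mathcal{S}$, so it cannot in fact be leaving $\mathcal{S}$ there. Since the trajectory therefore stays in the compact set $\overline{\Omega_c}\subset\mathcal{D}$, the escape-time theorem yields $t_{\max}=\infty$, and hence $s(t,x_0,\theta)\in\mathcal{S}$ for all $t\ge 0$, i.e., $\operatorname{dist}(s(t,x_0,\theta),\mathcal{S})\equiv 0$.

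For the second stage, set $v(t)\triangleq V(s(t,x_0,\theta))$; by the first stage $v$ is nonnegative and $C^1$ on $[0,\infty)$ with $v(0)=c$, it is nonincreasing, and by \eqref{eq:them 1 vdot} it satisfies $\dot v(t)\le-\tfrac{\gamma}{T_{\mathrm{p}}}\big(\alpha v^p(t)+\beta v^q(t)\big)^{k}$, with $v$ strictly decreasing wherever $v>0$. Rather than invoke a comparison principle (the right-hand side is only continuous, not Lipschitz, at $0$), I would separate variables directly on $\{v>0\}$: dividing by the positive quantity $\big(\alpha v^p+\beta v^q\big)^{k}$ and integrating over any interval $[0,t]$ on which $v>0$ throughout gives $\int_{v(t)}^{c}\big(\alpha s^p+\beta s^q\big)^{-k}\,\mathrm{d}s\ge\tfrac{\gamma}{T_{\mathrm{p}}}\,t$. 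Bounding the left side above by the improper integral $I\triangleq\int_0^{\infty}\big(\alpha s^p+\beta s^q\big)^{-k}\,\mathrm{d}s$, it then suffices to show $I=\gamma$: this forces $t<T_{\mathrm{p}}$ on every such interval, so $v$ must vanish at some time $T^{\star}\le T_{\mathrm{p}}$, and then monotonicity and nonnegativity of $v$ give $v(t)=0$ for all $t\ge T_{\mathrm{p}}$; since $s(t,x_0,\theta)\in\mathcal{S}$ and $V$ is positive definite on $\mathcal{S}$ by \eqref{eq:them 1 v1}--\eqref{eq:them 1 v2}, this yields $s(t,x_0,\theta)=0$ for all $t\ge T_{\mathrm{p}}$, which is the desired conclusion.

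It remains to compute $I$. Writing $\alpha s^p+\beta s^q=\alpha s^p\big(1+\tfrac{\beta}{\alpha}s^{q-p}\big)$ and substituting $u=\tfrac{\beta}{\alpha}s^{q-p}$ (legitimate since $q>p$, which follows from $pk<1<qk$ and $k>0$) turns $I$ into a constant multiple of the Beta integral $\int_0^{\infty}u^{a-1}(1+u)^{-k}\,\mathrm{d}u=\Gamma(a)\Gamma(k-a)/\Gamma(k)$ with $a=\tfrac{1-kp}{q-p}$ and $k-a=\tfrac{kq-1}{q-p}$; the hypotheses $pk<1$ and $qk>1$ are exactly what make $a>0$ and $k-a>0$, so the integral converges, and regrouping the prefactors $\alpha^{-k}$, $(q-p)^{-1}$, and $(\alpha/\beta)^{(1-kp)/(q-p)}$ reproduces precisely the stated expression for $\gamma$. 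I expect the main obstacle to be the first stage — turning the barrier property \eqref{eq:them 1 v3} and the boundedness/coercivity dichotomy into a rigorous argument that the sublevel set $\Omega_c$ is compactly contained in $\mathcal{S}$ and that no trajectory leaves $\mathcal{S}$ before its maximal existence time; the second stage, apart from the mild care needed because the right-hand side is non-Lipschitz at the origin, is a routine separation of variables together with recognizing and evaluating a Beta integral.
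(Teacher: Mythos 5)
The paper does not actually prove Theorem~\ref{thm: Lyapunov safe predefined time stability}; it is imported verbatim from the cited reference, so there is no in-text proof to compare against. Judged on its own merits, your argument is correct and is the standard proof for results of this type: the safety half (monotonicity of $V$ along trajectories, compactness of $\overline{\Omega_c}$ obtained from the barrier condition \eqref{eq:them 1 v3} together with boundedness of $\mathcal{S}$ or coercivity of $V$, and a continuation argument giving $t_{\max}=\infty$ and forward invariance) and the convergence half (separation of variables on $\{v>0\}$, avoiding the comparison lemma because the right-hand side is non-Lipschitz at the origin, followed by the identification $\gamma=\int_0^\infty(\alpha s^p+\beta s^q)^{-k}\,\mathrm{d}s$ via the Beta integral, with $pk<1<qk$ guaranteeing convergence) are both sound, and together they deliver exactly the two properties required by the paper's definition of safe predefined-time stability. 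Two minor remarks: you correctly read the exponent in \eqref{eq:them 1 vdot} as $k$ (the $r$ appearing there is a typo, since the hypotheses and the formula for $\gamma$ are stated in terms of $k$), and your closing step—$v$ nonincreasing and nonnegative with $v(T^\star)=0$ for some $T^\star\le T_{\mathrm{p}}$, plus positive definiteness of $V$ on $\mathcal{S}$—is the right way to conclude $s(t,x_0,\theta)=0$ for all $t\ge T_{\mathrm{p}}$.
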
 
A continuously differentiable function $V(\cdot)$ satisfying \eqref{eq:them 1 v1}-\eqref{eq:them 1 v3} is called a \textit{safely predefined-time stabilizing Lyapunov function candidate} for the nonlinear dynamical system \eqref{eqn_x_aynamics_controlled  undist}. If, additionally, $V(\cdot)$ satisfies \eqref{eq:them 1 vdot}, $V(\cdot)$ is called a \textit{safely predefined-time stabilizing Lyapunov function} for the nonlinear dynamical system \eqref{eqn_x_aynamics_controlled  undist}.

\begin{remark} Theorem \ref{thm: Lyapunov safe predefined time stability} investigates safe predefined-time stability for the nonlinear dynamical system \eqref{eqn_x_aynamics_controlled  undist} without requiring knowledge of the system trajectories. \frqed
\end{remark}

 \section{Robust Optimal Safe Predefined-Time Stabilization} \label{sec: robust optimal safe predefined-time stabilization problem}
 
In this section, we formulate the robust optimal safe predefined time stabilization problem as a \textit{two-player zero-sum differential game} to characterize robust optimal feedback controllers that render the equilibrium point of the closed-loop system safely predefined time while optimizing the closed-loop system performance against the worst-case feedback adversary. Specifically, we provide sufficient conditions for the existence of a safely predefined time stabilizing feedback \textit{Nash equilibrium} solution to the zero-sum game.

 Consider the controlled parameter-dependent nonlinear dynamical system given by
\begin{align} \label{eq: general system}
\dot{x}(t)= & F(x(t),\theta_\mathrm{s}, u(t), a(t)),\quad x(0)=x_{0},\quad  t\geq 0,
\end{align}
where, for every $t \geq 0$, $x(t) \in \mathcal{D} \subseteq \mathbb{R}^n$ is the state vector,
$\mathcal{D}$ is an open set with $0 \in \mathcal{D}$,  $\theta_\mathrm{s} \in \mathbb{R}^{N_\mathrm{s} }$ is the system parameter vector,
$u(t) \in U \subseteq \mathbb{R}^{m_u}$ is the control input with $0 \in U$,
$a(t) \in A \subseteq \mathbb{R}^{m_a}$ is the adversary input with $0 \in A$,
and $F : \mathcal{D} \times \mathbb{R}^{N_\mathrm{s} } \times U \times A \to \mathbb{R}^n$
is such that $ F(\cdot, \theta_\mathrm{s},\cdot,\cdot)$  is jointly continuous on $\mathcal{D} \times U \times A $ for all  $\theta_\mathrm{s} \in \mathbb{R}^{N_\mathrm{s} }$ and $F(0,\cdot,0,0) = 0$. The control $u(\cdot)$ and adversary $a(\cdot)$ in \eqref{eq: general system} belong to the class of \textit{admissible controls} $\mathcal{U} \triangleq\left\{u: [0, \infty) \rightarrow U: u(\cdot)\right.$ is Lebesgue measurable$\}$ and \textit{admissible adversaries} $\mathcal{A} \triangleq\left\{a: [0, \infty) \rightarrow A: a(\cdot)\right.$ is Lebesgue measurable$\}$. We assume that the required properties for the existence and uniqueness of solutions to \eqref{eq: general system} are satisfied, and we write $s\left(t, x_0,\theta_\mathrm{s}, u(\cdot), a(\cdot)\right), \ t\geq 0 $, to denote the solution to \eqref{eq: general system} with  initial condition $x_0$, system parameter  $\theta_\mathrm{s}$, admissible control $u(\cdot)$, and admissible adversary $a(\cdot)$.

The mappings $u^\star: \mathcal{S} \times \mathbb{R}^{N_\mathrm{c}}\to U$ and $a^\star: \mathcal{S} \times \mathbb{R}^{N_\mathrm{a}}\to A$ such that $u^\star(\cdot,\theta_\mathrm{c} )$ and $a^\star(\cdot,\theta_\mathrm{a} )$ are Lebesgue measurable functions on $\mathcal{S}$ for every control parameter vector $ \theta_\mathrm{c} \in \mathbb{R}^{N_\mathrm{c}}$ and adversary parameter vector $ \theta_\mathrm{a} \in \mathbb{R}^{N_\mathrm{a}}$
  with  $u^\star(0,\cdot) = 0$ and $a^\star(0,\cdot) = 0$ are called a \textit{control strategy} and an \textit{adversary strategy}. Furthermore, if $u(t) = u^\star(x(t),\theta_\mathrm{c}),\ t \geq 0$, and $a(t) = a^\star(x(t),\theta_\mathrm{a}),\ t \geq 0$, where
$u^\star(\cdot,\cdot)$ is a control strategy and $a^\star(\cdot,\cdot)$ is an adversary strategy and $x(t)$ is a solution to \eqref{eq: general system}, then we call $u(\cdot)$ a \textit{feedback control strategy} and $a(\cdot)$ a \textit{feedback adversary strategy}. Note that a feedback control strategy is an admissible control and a feedback adversary strategy is an admissible adversary since $u^\star(\cdot,\theta_\mathrm{c} )$ and $a^\star(\cdot,\theta_\mathrm{a} )$ are Lebesgue measurable functions and take values in $U$ and $A$. 

Given a control strategy $u^\star(\cdot,\cdot)$ and an adversary strategy $a^\star(\cdot,\cdot)$ and a feedback control strategy
$u(t) = u^\star(x(t),\theta_\mathrm{c}),\ t \geq 0$, and a feedback adversary strategy $a(t) = a^\star(x(t),\theta_\mathrm{a}),\ t \geq 0$, the closed-loop system is given by
\begin{align}\label{closed-loop}
\dot{x}(t) =  &F(x(t),\theta_\mathrm{s},u^\star(x(t),\theta_\mathrm{c}),a^\star(x(t),\theta_\mathrm{a})),\quad  x(0) = x_0,  \quad  t  \geq 0, 
\end{align} 
which can be cast in the form of \eqref{eqn_x_aynamics_controlled  undist} with 
$N=N_\mathrm{s}+N_\mathrm{c}+N_\mathrm{a}$, $\theta=[\theta_\mathrm{s}^\mathrm{T},\ \theta_\mathrm{c}^\mathrm{T},\ \theta_\mathrm{a} ^\mathrm{T}]^\mathrm{T}$, and
$f(x,\theta)=F\left(x,\theta_\mathrm{s},u^\star\left(x,\theta_\mathrm{c}\right), a^\star\left(x,\theta_\mathrm{a}\right)\right)$.

We now define the notion of a \textit{safely predefined-time stabilizing  pair of feedback strategies}.
\begin{definition}
Consider the controlled parameter-dependent nonlinear dynamical system 
given by \eqref{eq: general system}. Let   $\mathcal{S} \subset \mathcal{D}$ be a set of admissible states with $0 \in \mathcal{S}$ and let $T_{\mathrm{p}} >0$ be a predefined time. The pair $\left(u^\star(\cdot), a^\star(\cdot) \right)$
of the feedback control strategy $u(\cdot)=u^\star(x(\cdot),\theta_\mathrm{c})$ and the feedback adversary strategy $a(\cdot)=a^\star(x(\cdot),\theta_\mathrm{a})$ is \emph{safely predefined-time stabilizing} if there exists a system parameter $\theta_\mathrm{s} \in \mathbb{R}^{N_\mathrm{s} } $  such that 
the zero solution $x(t) \equiv 0$ to the closed-loop system \eqref{closed-loop} is safely predefined-time stable with predefined time $T_{\mathrm{p}}$ with respect to the set of admissible states $\mathcal{S}$. \frqed
\end{definition}
Given a set of admissible states  $\mathcal{S} \subset \mathcal{D}$ with $0 \in \mathcal{S}$ and a predefined time $T_{\mathrm{p}} >0$, we define, for every $ x_0 \in \mathcal{S}$, the set of \textit{safely predefined-time stabilizing pairs} of feedback strategies by $\mathcal{F}\left( x_0,\mathcal{S},T_{\mathrm{p}}\right)\triangleq\{u: [0, \infty) \rightarrow U$ and $a: [0, \infty) \rightarrow A: u(\cdot)$ and $a(\cdot)$ are a feedback control strategy and a feedback adversary strategy and $s\left(t, x_0,\theta_\mathrm{s}, u(\cdot), a(\cdot)\right), \ t\geq 0 $, is a solution to \eqref{eq: general system} satisfying $\operatorname{dist}(s\left(t, x_0,\theta_\mathrm{s}, u(\cdot), a(\cdot) \right),$ $ \mathcal{S}  )\equiv 0$ and $s\left(t, x_0,\theta_\mathrm{s}, u(\cdot), a(\cdot)\right)=0$ for all $t \geq T_{\mathrm{p}}\} \subset \mathcal{U} \times \mathcal{A}$. Furthermore, let $\mathcal{F}_u\left( x_0,\mathcal{S},T_{\mathrm{p}}\right)\subset \mathcal{U}$ and $\mathcal{F}_a\left( x_0,\mathcal{S},T_{\mathrm{p}}\right)\subset \mathcal{A}$ be the set of feedback control strategies and feedback adversary strategies such that $\mathcal{F}_u\left( x_0,\mathcal{S},T_{\mathrm{p}}\right) \times \mathcal{F}_a\left( x_0,\mathcal{S},T_{\mathrm{p}}\right) =\mathcal{F}\left( x_0,\mathcal{S},T_{\mathrm{p}}\right)$.

To evaluate the performance of the controlled parameter-dependent nonlinear dynamical system \eqref{eq: general system} over the time interval $[0, T_{\mathrm{p}}]$ for a given predefined time $T_{\mathrm{p}} >0$ and a set of admissible states $\mathcal{S}$  with $0 \in \mathcal{S}$, we define, for every $x_0 \in \mathcal{S},\ \theta_\mathrm{s} \in \mathbb{R}^{N_\mathrm{s} }$, $ u(\cdot) \in \mathcal{U} $, and $ a(\cdot) \in \mathcal{A}$,  the cost functional 
\begin{align}\label{eqn_generic_performance_measure}
J\left(x_{0},\theta_\mathrm{s}, u(\cdot), a(\cdot)\right) \triangleq \int_{0}^{T_{\mathrm{p}}} r(x(t), u(t), a(t)) \mathrm{d} t,
\end{align}
where $r: \mathcal{S}  \times U \times A \to \mathbb{R}$ is jointly continuous in $x$, $u$, and $a$. The controller $u(\cdot)$ is a player that \textit{minimizes} the cost functional \eqref{eqn_generic_performance_measure}, while the adversary $a(\cdot)$ is a player that \textit{maximizes} the cost functional \eqref{eqn_generic_performance_measure}.

In light of the above, we now state the robust optimal safe predefined time stabilization problem as a \textit{two-player zero-sum game}.
\begin{problem}
Consider the controlled parameter-dependent nonlinear dynamical system 
given by \eqref{eq: general system} with the cost functional \eqref{eqn_generic_performance_measure}. Let the controller $u(\cdot)$ be a player that \textit{minimizes} the cost functional \eqref{eqn_generic_performance_measure} and let the adversary $a(\cdot)$ be a player that \textit{maximizes} the cost functional \eqref{eqn_generic_performance_measure}. Let  $\mathcal{S} \subset \mathcal{D}$ be a set of admissible states with $0 \in \mathcal{S}$ and let $T_{\mathrm{p}} >0$ be a predefined time. For every $ x_0 \in \mathcal{S}$, let $\mathcal{F}\left( x_0,\mathcal{S},T_{\mathrm{p}} \right)\subset \mathcal{U} \times \mathcal{A}$ be the set of safely predefined time stabilizing pairs of feedback strategies and suppose that $\mathcal{F}\left( x_0,\mathcal{S},T_{\mathrm{p}}\right)$ is nonempty. For every initial condition $x_0 \in \mathcal{S}$, synthesize $(u^\star(\cdot), a^\star(\cdot)) \in \mathcal{F}\left( x_0,\mathcal{S},T_{\mathrm{p}} \right)$ such that the zero solution $x(t) \equiv 0$ to the closed-loop system \eqref{closed-loop} is safely predefined-time stable with predefined time $T_{\mathrm{p}}$ with respect to the set of admissible states $\mathcal{S}$ and $(u^\star(\cdot), a^\star(\cdot)) $ is a saddle point of the cost functional \eqref{eqn_generic_performance_measure}. \frqed 
\end{problem}
In other words, the robust optimal safe predefined-time stabilization problem involves synthesizing a feedback control strategy $u^\star(\cdot)$ to ensure safe predefined-time stability while optimizing performance against the worst-case feedback adversary strategy $a^\star(\cdot)$ so that $(u^\star(\cdot), a^\star(\cdot))$ is a saddle point of the cost functional \eqref{eqn_generic_performance_measure}.

For every $x_{0} \in \mathcal{S}$ and $\theta_\mathrm{s} \in \mathbb{R}^{N_\mathrm{s} }$, the robust optimal safe predefined-time stabilization problem involves the minimax control problem
\begin{align*}
\min _{u(\cdot) \in \mathcal{F}_u\left( x_0,\mathcal{S},T_{\mathrm{p}}        \right)} \max _{a(\cdot) \in \mathcal{F}_a\left( x_0,\mathcal{S},T_{\mathrm{p}}        \right)} J\left(x_{0},\theta_\mathrm{s}, u(\cdot), a(\cdot)\right)
\end{align*}
subject to \eqref{eq: general system}. 

The next theorem gives sufficient conditions for the existence of a safely predefined time stabilizing feedback \textit{Nash equilibrium} solution to the two-player zero-sum game. For the statement of this result, we need to define the Hamiltonian function 
\begin{align}\label{eqn_hamiltonian}
H(x,\theta_\mathrm{s},u,a,\lambda)\triangleq r(x, u, a) + \lambda^{ \mathrm{T}} F(x, \theta_\mathrm{s},u,a),& \nonumber \\ (x,\theta_\mathrm{s},u,a,\lambda)\in \mathcal{S}\times \mathbb{R}^{N_\mathrm{s} }  \times U \times A \times \mathbb{R}^{n }.&
\end{align}
\begin{theorem} \label{Thm: general Nash equilibrium}
Consider the controlled parameter-dependent nonlinear dynamical system 
given by \eqref{eq: general system} with cost functional \eqref{eqn_generic_performance_measure}. Let the controller $u(\cdot)$ be a player that \textit{minimizes} the cost functional \eqref{eqn_generic_performance_measure} and let the adversary $a(\cdot)$ be a player that \textit{maximizes} the cost functional \eqref{eqn_generic_performance_measure}. Let $\mathcal{S} \subset \mathcal{D}$ be a set of admissible states with $0 \in \mathcal{S}$ and let $T_{\mathrm{p}} >0$ be a predefined time. For every $ x_0 \in \mathcal{S}$, let $\mathcal{F}\left( x_0,\mathcal{S},T_{\mathrm{p}} \right)\subset \mathcal{U} \times \mathcal{A}$ be the set of safely predefined time stabilizing pairs of feedback strategies and suppose that $\mathcal{F}\left( x_0,\mathcal{S},T_{\mathrm{p}}\right)$ is nonempty.  Assume that there exist a continuously differentiable function $V : \mathcal{S} \to \mathbb{R}$, a system parameter vector $\theta_\mathrm{s}\in \mathbb{R}^{N_\mathrm{s} } $, real numbers $\alpha,\ \beta,\ p,\ q,\ r>0$ such that $p r<1$ and $q r>1$, a feedback control strategy $u^\star: \mathcal{S} \times \mathbb{R}^{N_\mathrm{c}}\to U$, and a feedback adversary strategy $a^\star: \mathcal{S} \times \mathbb{R}^{N_\mathrm{a}}\to A$
such that
\begin{gather}
 H(x,\theta_\mathrm{s},u^\star(x,\theta_\mathrm{c} ), a^\star(x,\theta_\mathrm{a} ),V^{\prime \mathrm{T}}(x))= 0, \quad  x \in \mathcal{S},\label{thm H=0} \\
 H(x,\theta_\mathrm{s},u^\star(x,\theta_\mathrm{c}),a,V^{\prime \mathrm{T}}(x))\leq 0, \quad  (x, a) \in \mathcal{S} \times A, \label{thm H<0} \\
H(x,\theta_\mathrm{s},u,a^\star(x,\theta_\mathrm{a}),V^{\prime \mathrm{T}}(x))\geq 0, \quad (x, u)\in \mathcal{S} \times U, \label{thm H>0}\\
u^\star(0,\theta_\mathrm{c} ) = 0,\label{thm ustar} \\
a^\star(0,\theta_\mathrm{a} ) = 0, \label{thm astar} \\
 V(0)=0, \label{eq:them 2 lyap1}\\
 V(x)>0, \quad x \in \mathcal{S}\backslash\{0\},\label{eq:them 2 lyap2}\\
 V(x)  \rightarrow \infty \text { as } x \rightarrow \partial \mathcal{S},\label{eq:them2 lyap3}\\
 V^{\prime}(x) F(x,\theta_\mathrm{s},u^\star(x,\theta_\mathrm{c}),a^\star(x,\theta_\mathrm{a})) 
 \leq -\frac{\gamma}{T_{\mathrm{p}}}\Big(\alpha V^p(x)+\beta V^q(x)\Big)^r,\quad  x \in \mathcal{S}, \label{eq:them 2 lyap4}
\end{gather}
where 
\begin{align*}
\gamma \triangleq \frac{\Gamma\left(\frac{1-r p}{q-p}\right) \Gamma\left(\frac{r q-1}{q-p}\right)}{\alpha^r \Gamma(r)(q-p)}\left(\frac{\alpha}{\beta}\right)^{\frac{1-r p}{q-p}}.
\end{align*}
 If  either $\mathcal{S}$ is bounded or both  $\mathcal{S}$ is unbounded and $V(\cdot)$ is coercive, then with the feedback control strategy $u(\cdot)=  u^\star(x(\cdot),\theta_\mathrm{c}) $ and the feedback adversary strategy $a(\cdot)= a^\star(x(\cdot),\theta_\mathrm{a})$, the zero solution $x(t) \equiv 0$ to \eqref{closed-loop} is safely predefined-time stable with  predefined time $T_{\mathrm{p}}$ with respect to the set of admissible states $\mathcal{S}$. Furthermore, if $x_0 \in \mathcal{S}$, then the 
pair of feedback strategies $\left(u^{\star}(\cdot), d^{\star}(\cdot)\right)$ is the Nash equilibrium of the two-player zero-sum game in the sense that
\begin{align} 
J(x_0,\theta_\mathrm{s},u^\star(x(\cdot),\theta_\mathrm{c}),a^\star(x(\cdot),\theta_\mathrm{a})) 
& = \min _{u(\cdot) \in \mathcal{F}_u\left( x_0,\mathcal{S},T_{\mathrm{p}}        \right)} \max _{a(\cdot) \in \mathcal{F}_a\left( x_0,\mathcal{S},T_{\mathrm{p}}        \right)} J\left(x_{0},\theta_\mathrm{s}, u(\cdot), a(\cdot)\right) \nonumber \\ & = \max _{a(\cdot) \in \mathcal{F}_a\left( x_0,\mathcal{S},T_{\mathrm{p}}        \right)} \min _{u(\cdot) \in \mathcal{F}_u\left( x_0,\mathcal{S},T_{\mathrm{p}}        \right)} J\left(x_{0},\theta_\mathrm{s}, u(\cdot), a(\cdot)\right) \label{eq:minimax}
\end{align}
and the Nash value is
\begin{equation}
J(x_0,\theta_\mathrm{s},u^\star(x(\cdot),\theta_\mathrm{c}),a^\star(x(\cdot),\theta_\mathrm{a}))  = V(x_0). \label{eqn_cost_hjb}
\end{equation} 
\end{theorem}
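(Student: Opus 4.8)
The plan is to split the statement into two parts handled by different tools. The safe predefined-time stability of the closed loop \eqref{closed-loop} will follow by a direct appeal to Theorem~\ref{thm: Lyapunov safe predefined time stability}, while the Nash/saddle-point identity will follow from a verification argument that integrates the Hamiltonian conditions \eqref{thm H=0}--\eqref{thm H>0} along suitable trajectories. For the first part: under $u(\cdot)=u^\star(x(\cdot),\theta_\mathrm{c})$ and $a(\cdot)=a^\star(x(\cdot),\theta_\mathrm{a})$, the closed loop \eqref{closed-loop}, written as $\dot x=f(x,\theta)$ with $f(x,\theta)=F(x,\theta_\mathrm{s},u^\star(x,\theta_\mathrm{c}),a^\star(x,\theta_\mathrm{a}))$, fits the setting of Theorem~\ref{thm: Lyapunov safe predefined time stability}, and $f(0,\theta)=0$ follows from \eqref{thm ustar}, \eqref{thm astar} and $F(0,\cdot,0,0)=0$. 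Conditions \eqref{eq:them 2 lyap1}--\eqref{eq:them 2 lyap4} are then precisely the hypotheses \eqref{eq:them 1 v1}--\eqref{eq:them 1 vdot} of Theorem~\ref{thm: Lyapunov safe predefined time stability} for this $f$ (with the exponent there taken equal to $r$ and the same $\gamma$), so that theorem yields safe predefined-time stability with predefined time $T_\mathrm{p}$ with respect to $\mathcal{S}$; in particular, for every $x_0\in\mathcal{S}$ the closed-loop solution $x(\cdot)$ satisfies $x(t)\in\mathcal{S}$ for all $t\geq 0$ and $x(t)=0$ for all $t\geq T_\mathrm{p}$.

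To obtain \eqref{eqn_cost_hjb}, I would fix $x_0\in\mathcal{S}$ and the closed-loop trajectory $x(\cdot)$ above. Since $V$ is continuously differentiable on $\mathcal{S}$ and $x(\cdot)$ is absolutely continuous with range in $\mathcal{S}$, the map $t\mapsto V(x(t))$ is absolutely continuous and, using \eqref{eqn_hamiltonian} with $\lambda=V^{\prime\mathrm{T}}(x(t))$ together with \eqref{thm H=0},
\begin{align*}
\frac{\mathrm{d}}{\mathrm{d}t}V(x(t)) &= V^{\prime}(x(t))\,F(x(t),\theta_\mathrm{s},u^\star(x(t),\theta_\mathrm{c}),a^\star(x(t),\theta_\mathrm{a}))\\
&= -\,r(x(t),u^\star(x(t),\theta_\mathrm{c}),a^\star(x(t),\theta_\mathrm{a}))
\end{align*}
for almost every $t\in[0,T_\mathrm{p}]$. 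Integrating over $[0,T_\mathrm{p}]$ and using $x(T_\mathrm{p})=0$ and $V(0)=0$ gives $V(x_0)=\int_0^{T_\mathrm{p}}r(x(t),u^\star(x(t),\theta_\mathrm{c}),a^\star(x(t),\theta_\mathrm{a}))\,\mathrm{d}t=J(x_0,\theta_\mathrm{s},u^\star(x(\cdot),\theta_\mathrm{c}),a^\star(x(\cdot),\theta_\mathrm{a}))$.

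Next I would derive the two one-sided bounds. For an arbitrary feedback adversary strategy $a(\cdot)\in\mathcal{F}_a(x_0,\mathcal{S},T_\mathrm{p})$, the pair $(u^\star(\cdot),a(\cdot))$ lies in $\mathcal{F}(x_0,\mathcal{S},T_\mathrm{p})=\mathcal{F}_u(x_0,\mathcal{S},T_\mathrm{p})\times\mathcal{F}_a(x_0,\mathcal{S},T_\mathrm{p})$, so the trajectory $\tilde x(\cdot)$ it generates satisfies $\tilde x(t)\in\mathcal{S}$ for all $t$ and $\tilde x(T_\mathrm{p})=0$. Repeating the computation above but replacing the equality \eqref{thm H=0} by the inequality \eqref{thm H<0} (legitimate since $a(t)\in A$ pointwise) gives $\frac{\mathrm{d}}{\mathrm{d}t}V(\tilde x(t))\leq -r(\tilde x(t),u^\star(\tilde x(t),\theta_\mathrm{c}),a(t))$ a.e., and integrating over $[0,T_\mathrm{p}]$ yields $J(x_0,\theta_\mathrm{s},u^\star(\tilde x(\cdot),\theta_\mathrm{c}),a(\cdot))\leq V(x_0)$. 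Symmetrically, for an arbitrary feedback control strategy $u(\cdot)\in\mathcal{F}_u(x_0,\mathcal{S},T_\mathrm{p})$, the trajectory $\bar x(\cdot)$ generated by $(u(\cdot),a^\star(\cdot))$ stays in $\mathcal{S}$ with $\bar x(T_\mathrm{p})=0$, and \eqref{thm H>0} gives $\frac{\mathrm{d}}{\mathrm{d}t}V(\bar x(t))\geq -r(\bar x(t),u(t),a^\star(\bar x(t),\theta_\mathrm{a}))$ a.e., so $J(x_0,\theta_\mathrm{s},u(\cdot),a^\star(\bar x(\cdot),\theta_\mathrm{a}))\geq V(x_0)$. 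Together with the value identity these are exactly the saddle-point relations $J(x_0,\theta_\mathrm{s},u^\star,a)\leq V(x_0)=J(x_0,\theta_\mathrm{s},u^\star,a^\star)\leq J(x_0,\theta_\mathrm{s},u,a^\star)$ holding for all $u(\cdot)\in\mathcal{F}_u(x_0,\mathcal{S},T_\mathrm{p})$ and $a(\cdot)\in\mathcal{F}_a(x_0,\mathcal{S},T_\mathrm{p})$. Finally, taking $\sup$ over $a(\cdot)$ and $\inf$ over $u(\cdot)$ in these bounds and invoking the elementary inequality $\sup_a\inf_u J\leq\inf_u\sup_a J$ forces $\inf_u\sup_a J=\sup_a\inf_u J=V(x_0)$, with the outer optima attained at $u^\star$ and $a^\star$ respectively; this gives \eqref{eq:minimax} and identifies the Nash value as $V(x_0)$.

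The main obstacle is not the computation but the regularity bookkeeping: one must verify that the closed-loop trajectory and the two mixed trajectories $\tilde x(\cdot),\bar x(\cdot)$ remain in $\mathcal{S}$ for all $t$ so that the $C^1$ function $V$, defined only on $\mathcal{S}$, can be differentiated along them; that $t\mapsto r(x(t),u(t),a(t))$ is measurable and integrable on $[0,T_\mathrm{p}]$ so that the fundamental theorem of calculus applies to the absolutely continuous function $V(x(\cdot))$; and that each of these trajectories reaches the origin at time $T_\mathrm{p}$ and stays there, so that the cost integral runs over the compact interval $[0,T_\mathrm{p}]$ with vanishing terminal value $V(0)=0$. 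For the closed-loop trajectory this is supplied by the stability conclusion of the first step, and for $\tilde x(\cdot),\bar x(\cdot)$ it is built into membership in $\mathcal{F}(x_0,\mathcal{S},T_\mathrm{p})$.
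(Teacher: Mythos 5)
Your proposal is correct and follows essentially the same route as the paper's proof: safe predefined-time stability via Theorem~\ref{thm: Lyapunov safe predefined time stability} applied to the closed loop, the value identity \eqref{eqn_cost_hjb} by integrating \eqref{thm H=0} along the closed-loop trajectory with $V(x(T_{\mathrm{p}}))=0$, and the two one-sided saddle-point bounds by integrating \eqref{thm H<0} and \eqref{thm H>0} along the mixed trajectories, which together yield \eqref{eq:minimax}. Your added attention to the regularity bookkeeping (absolute continuity of $t\mapsto V(x(t))$ and the explicit $\sup$--$\inf$ interchange) is more careful than the paper's write-up but does not change the argument.
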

\begin{proof}
Safe predefined-time stability is a direct consequence
of \eqref {eq:them 2 lyap1}–\eqref{eq:them2 lyap3} and \eqref{eq:them 2 lyap4} by applying Theorem \ref{thm: Lyapunov safe predefined time stability} to the closed-loop
system \eqref{closed-loop}.

Next, to prove Nash equilibrium, let $x_0 \in \mathcal{S}$, let $u(\cdot) =u^\star(x(\cdot),\theta_\mathrm{c})$, let $a(\cdot) =a^\star(x(\cdot),\theta_\mathrm{a})$, and let $x(t)=s\left(t, x_0,\theta_\mathrm{s},u^\star(x(\cdot),\theta_\mathrm{c}),a^\star(x(\cdot),\theta_\mathrm{a})\right), \ t\geq 0 $, be the solution to \eqref{closed-loop}. Then, since the time derivative of $V(\cdot)$ along the trajectories of the closed-loop
system \eqref{closed-loop} is  defined by
\begin{align*}
\dot{V}(x(t)) \triangleq V^{\prime}(x(t)) F(x(t),\theta_\mathrm{s},u^\star(x(t),\theta_\mathrm{c}),a^\star(x(t),\theta_\mathrm{a})), \quad t \geq 0,
\end{align*}
and safe predefined-time stability of \eqref{closed-loop} implies $  x(t) \in \mathcal{S},\ t\geq 0$, it follows from \eqref{thm H=0} that
\begin{align} \label{proorf:hjb}
     H(x(t),\theta_\mathrm{s},& u^\star(x(t),\theta_\mathrm{c}),a^\star(x(t),\theta_\mathrm{a}),V^{\prime \mathrm{T}}(x(t))) \nonumber \\
      &=r(x, u^\star(x(t),\theta_\mathrm{c}),a^\star(x(t),\theta_\mathrm{a})) +\dot{V}(x(t))
     \nonumber \\  &=0,  \quad x(t) \in \mathcal{S},\quad t\geq 0.
\end{align}
Now, integrating \eqref{proorf:hjb} over the time interval $[0, T_{\mathrm{p}}]$ and using the definition of the cost functional \eqref{eqn_generic_performance_measure} yields
\begin{align*}
J\left(x_0,\theta_\mathrm{s},u^\star(x(\cdot),\theta_\mathrm{c}),a^\star(x(\cdot),\theta_\mathrm{a})\right) +V(x(T_{\mathrm{p}}))=V\left(x_0\right),
\end{align*}
which, since $  V(x(T_{\mathrm{p}}))=0$ by \eqref{eq:them 2 lyap1} and safe predefined-time stability of \eqref{closed-loop}, implies \eqref{eqn_cost_hjb}.

Next, let $x_0 \in \mathcal{S}$, let $u(\cdot) =u^\star(x(\cdot),\theta_\mathrm{c})$, let $a(\cdot) \in \mathcal{F}_a\left( x_0,\mathcal{S},T_{\mathrm{p}} \right) $, and let \\ $x(t)=s\left(t, x_0,\theta_\mathrm{s},u^\star(x(\cdot),\theta_\mathrm{c}), a(x(\cdot),\theta_\mathrm{a})\right),\ t \geq 0 $, be the solution to 
\begin{align} \label{closed loop ustar d}
\dot{x}(t)=  F(x(t),\theta_\mathrm{s}, u^\star(x(t),\theta_\mathrm{c}), a(x(t),\theta_\mathrm{a})),\quad x(0)=x_{0},\quad  t\geq 0.
\end{align} 
Then, since the time derivative of $V(\cdot)$ along the trajectories  of \eqref{closed loop ustar d} is given by
\begin{align*}
\dot{V}(x(t)) \triangleq V^{\prime}(x(t))F(x(t),\theta_\mathrm{s}, u^\star(x(t),\theta_\mathrm{c}), a(x(t),\theta_\mathrm{a})), \quad t \geq 0,
\end{align*}
and $(u^\star(\cdot), a(\cdot)) \in \mathcal{F}\left( x_0,\mathcal{S},T_{\mathrm{p}} \right) $ implies $  x(t) \in \mathcal{S},\ t\geq 0$, 
it follows from \eqref{thm H<0}  that
\begin{align} \label{proorf:hjb ineq} 
   H(x(t),\theta_\mathrm{s},u^\star(x(t),& \theta_\mathrm{c}),d(x(t),\theta_\mathrm{a}),V^{\prime \mathrm{T}}(x(t)))  \nonumber\\
 &  =r(x(t), u^\star(x(t),\theta_\mathrm{c}),d(x(t),\theta_\mathrm{a})) +\dot{V}(x(t))
     \nonumber \\
 & \leq  0,  \quad x(t) \in \mathcal{S},\quad t\geq 0.
\end{align}
Now, integrating \eqref{proorf:hjb ineq} over  $[0, T_{\mathrm{p}}]$, using \eqref{eqn_generic_performance_measure}, \eqref{eq:them 2 lyap1}, and  \eqref{eqn_cost_hjb}, and using the fact that \\ $(u^\star(\cdot), a(\cdot)) \in \mathcal{F} \left( x_0,\mathcal{S},T_{\mathrm{p}} \right) $  yields
\begin{align} \label{eq:left}
J\left(x_0,\theta_\mathrm{s},u^\star(\cdot), a(\cdot)\right) \leq &V\left(x_0\right) \nonumber \\
=& J\left(x_0,\theta_\mathrm{s},u^\star(\cdot), a^\star(\cdot)\right). 
\end{align}

Next, let $x_0 \in \mathcal{S}$, let $u(\cdot) \in \mathcal{F}_u\left( x_0,\mathcal{S},T_{\mathrm{p}} \right) $, let $a(\cdot) =a^\star(x(\cdot),\theta_\mathrm{a})$, and let \\ $x(t)=s\left(t, x_0,\theta_\mathrm{s},u(x(\cdot),\theta_\mathrm{c}), a^\star(x(\cdot),\theta_\mathrm{a})\right), \ t\geq 0 $, be the solution to 
\begin{align} \label{closed loop u dstar}
\dot{x}(t)=  F(x(t),\theta_\mathrm{s}, u(x(t),\theta_\mathrm{c}), a^\star(x(t),\theta_\mathrm{a})),\quad x(0)=x_{0},\quad  t\geq 0.
\end{align} 
Then, computing the time derivative of $V(\cdot)$ along the trajectories of \eqref{closed loop u dstar} as
\begin{align*}
\dot{V}(x(t)) \triangleq V^{\prime}(x(t))F(x(t),\theta_\mathrm{s}, u(x(t),\theta_\mathrm{c}), a^\star(x(t),\theta_\mathrm{a})), \quad t \geq 0,
\end{align*}
and using the fact that $(u(\cdot), a^\star(\cdot)) \in \mathcal{F}\left( x_0,\mathcal{S},T_{\mathrm{p}} \right) $ implies $  x(t) \in \mathcal{S},\ t\geq 0$, 
it follows from \eqref{thm H>0}  that
\begin{align} \label{proorf:hjb ineq >0} 
  H(x(t),\theta_\mathrm{s},&u(x(t),\theta_\mathrm{c}),a^\star(x(t),\theta_\mathrm{a}),V^{\prime \mathrm{T}}(x(t))) \nonumber \\ 
 & =r(x(t), u(x(t),\theta_\mathrm{c}),a^\star(x(t),\theta_\mathrm{a})) +\dot{V}(x(t))
     \nonumber \\ & \geq  0,  \quad x(t) \in \mathcal{S},\quad t\geq 0.
\end{align}
Now, integrating \eqref{proorf:hjb ineq >0} over  $[0, T_{\mathrm{p}}]$ and using \eqref{eqn_generic_performance_measure}, \eqref{eq:them 2 lyap1}, \eqref{eqn_cost_hjb}, and the fact that $(u(\cdot), a^\star(\cdot)) \in \mathcal{F}\left( x_0,\mathcal{S},T_{\mathrm{p}} \right) $, we obtain
\begin{align} \label{eq:right}
J\left(x_0,\theta_\mathrm{s},u(\cdot), a^\star(\cdot)\right) \geq &V\left(x_0\right) \nonumber \\ 
=& J\left(x_0,\theta_\mathrm{s},u^\star(\cdot), a^\star(\cdot)\right). 
\end{align}
In light of \eqref{eq:left} and \eqref{eq:right}, it follows that, for every $x_0 \in \mathcal{S},\ u(\cdot) \in \mathcal{F}_u( x_0,\mathcal{S},T_{\mathrm{p}} )$, and $a(\cdot) \in \mathcal{F}_a\left( x_0,\mathcal{S},T_{\mathrm{p}} \right)$,
\begin{align*}
    J\left(x_0,\theta_\mathrm{s},u^\star(\cdot), a(\cdot)\right) & \leq J\left(x_0,\theta_\mathrm{s},u^\star(\cdot), a^\star(\cdot)\right) \\& \leq J\left(x_0,\theta_\mathrm{s},u(\cdot), a^\star(\cdot)\right),
\end{align*}
which implies \eqref{eq:minimax}.    
\frQED
\end{proof}

It is important to note that, unlike QP-based methods that depend on system trajectories, Theorem \ref{Thm: general Nash equilibrium} examines robust optimal safe predefined-time stabilization for the the controlled parameter-dependent nonlinear dynamical system \eqref{eq: general system} with the cost functional \eqref{eqn_generic_performance_measure} without knowledge of the system trajectories.
\begin{remark}
The following observations provide insights into the  robust optimal safe predefined-time stabilization problem.

      1) Equation \eqref{eq:minimax} asserts that, for all $x_0 \in \mathcal{S}$, $\left(u^{\star}(\cdot), d^{\star}(\cdot)\right)$ is a saddle point equilibrium for the two-player zero sum game over the set of safely predefined time stabilizing pairs of feedback strategies $\mathcal{F}\left( x_0,\mathcal{S}, T_{\mathrm{p}}\right)$, where no player benefits from a unilateral deviation from the equilibrium. Note that an explicit characterization of  $\mathcal{F}_u\left( x_0,\mathcal{S},T_{\mathrm{p}}\right),\ \mathcal{F}_a\left( x_0,\mathcal{S},T_{\mathrm{p}}\right),$ and  $\mathcal{F}\left( x_0,\mathcal{S},T_{\mathrm{p}}\right)$ is not required.
      
      2) Conditions \eqref{eq:them 2 lyap1}-\eqref{eq:them 2 lyap4} guarantee safe predefined time stability, while conditions \eqref{thm H=0}-\eqref{thm H>0} establish Nash equilibrium over the set of admissible control values $U$ and adversary values $A$. 

      3) Equation \eqref{thm H=0}  is the steady-state HJI equation for the controlled parameter-dependent nonlinear dynamical system \eqref{eq: general system} with the cost functional \eqref{eqn_generic_performance_measure}. Furthermore, note that \eqref{thm H=0}, \eqref{thm H<0}, and \eqref{thm H>0} imply
\begin{align*}
& \underset{u \in U }{ \min } \ \underset{a \in A }{ \max } \  H(x,\theta_\mathrm{s},u,a,V^{\prime \mathrm{T}}(x)) 
= \underset{a \in A }{ \max }  \ \underset{u \in U }{ \min }  \  H(x,\theta_\mathrm{s},u,a,V^{\prime \mathrm{T}}(x))
=0, \quad  x \in \mathcal{S},
\end{align*}
which is an alternative expression for the steady-state  Hamilton-Jacobi-Isaacs equation \eqref{thm H=0}.

      4) The Nash feedback control strategy $u^\star(x,\theta_\mathrm{c})$ is an \textit{adversarially robust optimal control} that minimizes the maximum value of the Hamiltonian function \eqref{eqn_hamiltonian} for every $x \in \mathcal{S}$ with respect to $U$, that is, 
      \begin{align*}
      u^\star(x,\theta_\mathrm{c})&=\underset{u \in U}{\arg \min }\   \underset{a \in A }{ \max } \ H(x,\theta_\mathrm{s},u,a,V^{\prime \mathrm{T}}(x)) \\ 
      &=\underset{u \in U}{\arg \min }  \   H(x,\theta_\mathrm{s},u,a^\star(x,\theta_\mathrm{a}),V^{\prime \mathrm{T}}(x)). 
     \end{align*}
      Hence, it follows that $u^\star(\cdot,\theta_\mathrm{c})$ is the optimal response against the worst-case adversary $a^\star(\cdot,\theta_\mathrm{a})$ regardless of the initial condition $x_{0} \in \mathcal{S}$. 

     5) Equation \eqref{eqn_cost_hjb} shows that the safely predefined time stabilizing Lyapunov function $V(\cdot)$ for the closed-loop system \eqref{closed-loop} serves as a value of the game quantifying the best worst-case performance or, equivalently, the worst best-case performance.

     6) Although an explicit expression for the settling-time function $T(\cdot,\cdot)$ cannot be given, safe predefined time stability guarantees that there exists a parameter vector  $\theta \in \mathbb{R}^{N}$  such that  the settling-time function $T(\cdot,\theta)$ is uniformly bounded by $T_{\mathrm{p}}$, that is, $\sup _{x_0 \in \mathcal{S} } T\left(x_0,\theta \right) \leq T_{\mathrm{p}} $.    
     
     7) The system parameter vector $\theta_\mathrm{s}\in \mathbb{R}^{N_\mathrm{s}}$, the control parameter vector $ \theta_\mathrm{c} \in \mathbb{R}^{N_\mathrm{c}}$, and the adversary parameter vector $ \theta_\mathrm{a} \in \mathbb{R}^{N_\mathrm{a}}$ implicitly depend on the predefined time $T_{\mathrm{p}}$ and the set of admissible states $\mathcal{S}$.\frqed
 \end{remark}

\section{Robust Optimal and Inverse Optimal Safe Predefined-Time Stabilization  for Nonlinear Affine Systems} \label{sec: optimal and inverse optimal affine}
In this section, we specialize the results of Section \ref{sec: robust optimal safe predefined-time stabilization problem} to parameter-dependent nonlinear affine dynamical systems of the form
\begin{align} \label{eq: affine system}
\dot{x}(t)=  f(x(t),\theta_f)+G(x(t),& \theta_G)u(t) +K(x(t),\theta_K)a(t), 
\quad x(0)=x_{0},\quad  t\geq 0,
\end{align}
where, for every $t \geq 0$, $x(t) \in \mathbb{R}^n$ is the state vector, $u(t) \in \mathbb{R}^{m_u}$ is the control input,  $a(t) \in \mathbb{R}^{m_a}$ is the adversary input,  $f: \mathbb{R}^{n} \times \mathbb{R}^{N_f }\rightarrow \mathbb{R}^{n}$ is such that $ f(\cdot, \theta_f)$  is  continuous on $\mathbb{R}^{n}$ for all  $\theta_f \in \mathbb{R}^{N_f }$   and  $f(0,\cdot) = 0$, $G: \mathbb{R}^{n} \times \mathbb{R}^{N_G } \rightarrow \mathbb{R}^{n \times m_u}$  is such that $ G(\cdot, \theta_G)$ is  continuous on $\mathbb{R}^{n}$ for all  $\theta_G \in \mathbb{R}^{N_G }$, and $K: \mathbb{R}^{n} \times \mathbb{R}^{N_K} \rightarrow \mathbb{R}^{n \times m_a}$  is such that $ K(\cdot, \theta_K)$ is continuous on $\mathbb{R}^{n}$ for all  $\theta_K \in \mathbb{R}^{N_K }$. 
In this case, the system parameter vector $\theta_\mathrm{s} \in \mathbb{R}^{N_\mathrm{s}}$ is defined as $\theta_\mathrm{s} \triangleq [ \theta_f^\mathrm{T},\ \theta_G ^\mathrm{T},\ \theta_K^\mathrm{T}]^\mathrm{T}$.

For every $(x,u,a) \in \mathcal{S}  \times \mathbb{R}^{m_u}\times \mathbb{R}^{m_a}$, we consider running costs $r(x,u,a)$ of the form
\begin{align} \label{eq: performance integrand}
r(x, u, a)\triangleq & L(x)+L_{u}(x) u+L_{a}(x)a+u^{\mathrm{T}} R_u(x) u 
-a^{\mathrm{T}} R_a(x) a ,
\end{align}
where $L: \mathcal{S} \rightarrow \mathbb{R},\  L_u: \mathcal{S} \rightarrow \mathbb{R}^{1 \times m_u}, \  L_a: \mathcal{S} \rightarrow \mathbb{R}^{1 \times m_a},\ R_u: \mathcal{S} \rightarrow \mathbb{R}^{m_u \times m_u}$, and $R_a: \mathcal{S} \rightarrow \mathbb{R}^{m_a \times m_a}$ are continuous on $\mathcal{S}$ such that $L(0)=0$, $L_{u}(0)=0$, $L_{a}(0)=0$, $R_u(x) \succ 0,\ x \in \mathcal{S}$, and $R_a(x) \succ 0,\ x \in \mathcal{S}$. Note that $L(\cdot)$ penalizes deviations of the state $x$ from the origin, if $L(x) \geq 0,\ x \in \mathcal{S}$, $L_u(\cdot)$ penalizes alignment with the control input $u$, $L_a(\cdot)$ rewards alignment with the adversary input $a$, and $R_u(\cdot)$ and $R_a(\cdot)$ penalize control and adversary effort.

\begin{remark}
The functions $L_u(x)$ and $L_a(x)$ in the running cost \eqref{eq: performance integrand} are \textit{arbitrary vector functions} of $x \in \mathcal{S}$ and provide the cross-weighting terms $L_u(x)u$ and $L_a(x)a$ in the running cost $r(x,u,a)$. Note that the terms $L_u(x)u$ and $L_a(x)a$ can be written as $
 L_{u}(x)u=\left\| L_{u}^\mathrm{T}(x) \right\|_2 \left\|u\right\|_2  \cos (\phi_{ L_{u}}(x)-\phi_u),\  x \in \mathcal{S},
$ and \ $ L_{a}(x)a=\left\| L_{a}^\mathrm{T}(x) \right\|_2 \left\|d\right\|_2  \cos (\phi_{ L_{a}}(x)-\phi_a),\  x \in \mathcal{S},$ where $\phi_{ L_{u}}(x)-\phi_u$ is the angular difference between $L_{u}^\mathrm{T}(x)$ and $u$ at $ x \in \mathcal{S}$ and $\phi_{L_{a}}(x)-\phi_a$ is the angular difference between $L_{a}^\mathrm{T}(x)$ and $a$ at $ x \in \mathcal{S}$. Hence, $ L_{u}(x)$ penalizes alignment with the control input $u$ at $ x \in \mathcal{S}$ and $ L_{a}(x)$ rewards alignment with the adversary input $a$ at $ x \in \mathcal{S}$. For example, $L_u(\cdot)$ and $L_a(\cdot)$ may be designed to indicate the control and adversary input directions that yield instability, unsafety, or low performance.\frqed   
\end{remark}

For every $x_0 \in \mathcal{S},\ \theta_\mathrm{s} \in \mathbb{R}^{N_\mathrm{s} }$, $ u(\cdot) \in \mathcal{U} $, and $ a(\cdot) \in \mathcal{A}$, the cost functional \eqref{eqn_generic_performance_measure}, with $r(\cdot, \cdot, \cdot)$ given by \eqref{eq: performance integrand}, becomes 
\begin{align}  \label{eq:performance measure}
J\left(x_0,\theta_\mathrm{s},u(\cdot),a(\cdot)\right) =&\int_{0}^{T_{\mathrm{p}}} \Big(L(x(t))+L_{u}(x(t)) u(t) +L_{a}(x(t)) a(t) \nonumber\\  &
+u^{\mathrm{T}}(t) R_u(x(t))u(t) -a^{\mathrm{T}}(t) R_a(x(t)) a(t) \Big)\mathrm{d} t.
\end{align}
Next, we specialize Theorem \ref{Thm: general Nash equilibrium} to parameter-dependent nonlinear affine dynamical systems \eqref{eq: affine system} with the cost functional \eqref{eq:performance measure}.
\begin{corollary}\label{Thm: affine Nash equilibrium}
Consider the parameter-dependent nonlinear affine dynamical
system \eqref{eq: affine system} with cost functional \eqref{eqn_generic_performance_measure}. Let the controller $u(\cdot)$ be a player that \textit{minimizes} the cost functional \eqref{eqn_generic_performance_measure} and let the adversary $a(\cdot)$ be a player that \textit{maximizes} the cost functional \eqref{eqn_generic_performance_measure}. Let $\mathcal{S} \subset  \mathbb{R}^{n}$ be a set of admissible states with $0 \in \mathcal{S}$ and let $T_{\mathrm{p}} >0$ be a predefined time. For every $ x_0 \in \mathcal{S}$, let $\mathcal{F}\left( x_0,\mathcal{S},T_{\mathrm{p}} \right)\subset \mathcal{U} \times \mathcal{D}$ be the set of safely predefined time stabilizing pairs of feedback strategies and suppose that $\mathcal{F}\left( x_0,\mathcal{S},T_{\mathrm{p}}\right)$ is nonempty. Assume that there exist a continuously differentiable function $V : \mathcal{S} \to \mathbb{R}$, system parameter vectors $\theta_f\in \mathbb{R}^{N_f }$, $\theta_G\in \mathbb{R}^{N_G }$, and $\theta_K \in \mathbb{R}^{N_K}$, and real numbers $\alpha,\ \beta,\ p,\ q,\ r>0$ such that $p r<1,\ q r>1$, and 
\begin{gather}
 L(x) + V^\prime (x) f(x,\theta_f) - \frac{1}{4} \bigg[ V^\prime(x) G(x,\theta_G) + L_u(x) \bigg] \nonumber \\
\cdot  R_u^{-1}(x) \bigg[ V'(x) G(x,\theta_G) + L_u(x) \bigg]^{\mathrm{T}}  + \frac{1}{4} \bigg[ V^\prime(x) K(x,\theta_K) + L_a(x) \bigg] \nonumber \\ 
\cdot  R_a^{-1}(x) \bigg[ V'(x) K(x,\theta_K) + L_a(x) \bigg]^{\mathrm{T}}=0, \quad  x \in \mathcal{S},\label{eqn: HJI}\\
 V(0)=0, \label{eqn: lyap1} \\
 V(x)>0, \quad x \in \mathcal{S}\backslash\{0\},\label{eqn: lyap2} \\
  V(x)  \rightarrow \infty \text { as } x \rightarrow \partial \mathcal{S},\label{eqn: lyap3} \\
V'(x) \bigg[ f(x,\theta_f) - \frac{1}{2} G(x,\theta_G) R_u^{-1}(x) L_u^{\rm T}(x) 
+ \frac{1}{2} K(x,\theta_K) R_a^{-1}(x) L_a^{\rm T}(x) \nonumber \\
- \frac{1}{2} G(x,\theta_G) R_u^{-1}(x) G^{\rm T}(x,\theta_G) V'^{\rm T}(x) 
+ \frac{1}{2} K(x,\theta_K) R_a^{-1}(x) K^{\rm T}(x,\theta_K) V'^{\rm T}(x) 
\bigg] \nonumber\\ 
\leq-\frac{\gamma}{T_{\mathrm{p}}}\left(\alpha V^p(x)+\beta V^q(x)\right)^{r},\quad  x \in \mathcal{S}, \label{eqn: lyap4}  
\end{gather}
where 
\begin{align*}
\gamma \triangleq \frac{\Gamma\left(\frac{1-r p}{q-p}\right) \Gamma\left(\frac{r q-1}{q-p}\right)}{\alpha^r \Gamma(r)(q-p)}\left(\frac{\alpha}{\beta}\right)^{\frac{1-r p}{q-p}}.
\end{align*}
 If  either $\mathcal{S}$ is bounded or both  $\mathcal{S}$ is unbounded and $V(\cdot)$ is coercive, then  there exist a control parameter vector $ \theta_\mathrm{c} \in \mathbb{R}^{N_\mathrm{c}}$ and an adversary parameter vector $ \theta_\mathrm{a} \in \mathbb{R}^{N_\mathrm{a}}$ such that with the feedback control strategy
\begin{align} 
u(t)= & u^\star(x(t),\theta_\mathrm{c}) \nonumber \\ 
&\triangleq - \frac{1}{2} R_u^{-1}(x(t)) \big[L_u(x(t)) + V'(x(t)) G(x(t),\theta_G) \big]^{\rm T},\quad 
 x(t) \in \mathcal{S},\quad t \geq 0,
\label{eqn_control}
\end{align}
and the feedback adversary strategy
\begin{align} 
a(t)= & a^\star(x(t),\theta_\mathrm{a}) \nonumber \\  
& \triangleq  \frac{1}{2} R_a^{-1}(x(t)) \big[L_a(x(t)) + V'(x(t)) K(x(t),\theta_K) \big]^{\rm T},\quad x(t) \in \mathcal{S},\quad t \geq 0,
\label{eqn_adversary}
\end{align}
the zero solution $x(t) \equiv 0$ to \eqref{closed-loop} is safely predefined-time stable with  predefined time $T_{\mathrm{p}}$ with respect to the set of admissible states $\mathcal{S}$. Furthermore, if $x_0 \in \mathcal{S}$, then the 
pair of feedback strategies $\left(u^{\star}(\cdot), d^{\star}(\cdot)\right)$ is the Nash equilibrium of the two-player zero-sum game in the sense of \eqref{eq:minimax}
and the Nash value is given by  \eqref{eqn_cost_hjb}.
\end{corollary}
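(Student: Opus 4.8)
The plan is to verify that the affine system \eqref{eq: affine system} with running cost \eqref{eq: performance integrand} satisfies every hypothesis of Theorem~\ref{Thm: general Nash equilibrium} under the candidate feedback laws \eqref{eqn_control}--\eqref{eqn_adversary}, and then invoke that theorem. First I would substitute $F(x,\theta_\mathrm{s},u,a) = f(x,\theta_f) + G(x,\theta_G)u + K(x,\theta_K)a$ together with \eqref{eq: performance integrand} into the Hamiltonian \eqref{eqn_hamiltonian} evaluated at $\lambda = V^{\prime \mathrm{T}}(x)$. The resulting expression is affine-plus-quadratic in $(u,a)$: it carries the quadratic terms $u^\mathrm{T} R_u(x) u$ and $-a^\mathrm{T} R_a(x) a$, the linear terms $[L_u(x)+V^\prime(x)G(x,\theta_G)]u$ and $[L_a(x)+V^\prime(x)K(x,\theta_K)]a$, an $x$-only remainder, and \emph{no} bilinear $u$--$a$ coupling. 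Since $R_u(x)\succ 0$ and $R_a(x)\succ 0$ on $\mathcal{S}$, completing the square separately in $u$ and in $a$ yields the identity
\begin{multline*}
H(x,\theta_\mathrm{s},u,a,V^{\prime \mathrm{T}}(x)) = \Phi(x) + \bigl(u-u^\star(x,\theta_\mathrm{c})\bigr)^\mathrm{T} R_u(x)\bigl(u-u^\star(x,\theta_\mathrm{c})\bigr) \\ - \bigl(a-a^\star(x,\theta_\mathrm{a})\bigr)^\mathrm{T} R_a(x)\bigl(a-a^\star(x,\theta_\mathrm{a})\bigr),
\end{multline*}
where $u^\star$ and $a^\star$ are exactly the feedback laws \eqref{eqn_control}--\eqref{eqn_adversary} (taking $\theta_\mathrm{c}\triangleq\theta_G$, $\theta_\mathrm{a}\triangleq\theta_K$, so $N_\mathrm{c}=N_G$, $N_\mathrm{a}=N_K$) and $\Phi(x)$ is precisely the left-hand side of the steady-state HJI equation \eqref{eqn: HJI}.

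From this identity the Hamiltonian conditions follow immediately. Equation \eqref{eqn: HJI} asserts $\Phi(x)=0$, hence $H(x,\theta_\mathrm{s},u^\star,a^\star,V^{\prime \mathrm{T}}(x)) = 0$, i.e. \eqref{thm H=0}; setting $u=u^\star$ leaves $-(a-a^\star)^\mathrm{T} R_a(x)(a-a^\star)\le 0$ for all $a\in A$, i.e. \eqref{thm H<0}; and setting $a=a^\star$ leaves $(u-u^\star)^\mathrm{T} R_u(x)(u-u^\star)\ge 0$ for all $u\in U$, i.e. \eqref{thm H>0}. Conditions \eqref{thm ustar}--\eqref{thm astar} follow by evaluating \eqref{eqn_control}--\eqref{eqn_adversary} at $x=0$ using $L_u(0)=0$, $L_a(0)=0$, and $V^\prime(0)=0$; the last holds because $V$ is continuously differentiable and, by \eqref{eqn: lyap1}--\eqref{eqn: lyap2}, attains a strict local minimum at the origin.

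For the Lyapunov part, \eqref{eqn: lyap1}--\eqref{eqn: lyap3} coincide verbatim with \eqref{eq:them 2 lyap1}--\eqref{eq:them2 lyap3}, and substituting \eqref{eqn_control}--\eqref{eqn_adversary} into $V^\prime(x) F(x,\theta_\mathrm{s},u^\star(x,\theta_\mathrm{c}),a^\star(x,\theta_\mathrm{a})) = V^\prime(x)[f(x,\theta_f)+G(x,\theta_G)u^\star+K(x,\theta_K)a^\star]$ and expanding reproduces the left-hand side of \eqref{eqn: lyap4} exactly; thus \eqref{eqn: lyap4} is precisely \eqref{eq:them 2 lyap4}, with the same constant $\gamma$. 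The boundedness/coercivity alternative is inherited verbatim. With all hypotheses of Theorem~\ref{Thm: general Nash equilibrium} verified, it follows that the closed loop \eqref{closed-loop} is safely predefined-time stable with predefined time $T_{\mathrm{p}}$ and that $(u^\star(\cdot),a^\star(\cdot))$ is the Nash equilibrium, giving \eqref{eq:minimax}--\eqref{eqn_cost_hjb}.

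The only step needing care — the main, though mild, obstacle — is confirming that the decoupled stationary pair $(u^\star,a^\star)$ is a \emph{genuine} saddle point of $H$ over all of $U\times A$, not merely a critical point; this is exactly what the completed-square identity above secures, and it rests on $R_u(x)\succ 0$, $R_a(x)\succ 0$, on the absence of any bilinear $u^\mathrm{T}(\cdot)a$ term in \eqref{eq: performance integrand} and \eqref{eq: affine system} (so that $u^\star$ does not depend on $a$ nor $a^\star$ on $u$), and on $U=\mathbb{R}^{m_u}$, $A=\mathbb{R}^{m_a}$ so that the unconstrained optimizers are admissible. Everything else is routine algebraic verification of the substitutions.
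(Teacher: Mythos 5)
Your proposal is correct and follows essentially the same route as the paper: specialize Theorem~\ref{Thm: general Nash equilibrium} by showing the affine-quadratic Hamiltonian decomposes as the HJI residual plus $(u-u^\star)^{\mathrm{T}}R_u(x)(u-u^\star)-(a-a^\star)^{\mathrm{T}}R_a(x)(a-a^\star)$, which yields \eqref{thm H=0}--\eqref{thm H>0}, with \eqref{thm ustar}--\eqref{thm astar} from $V'(0)=0$, $L_u(0)=0$, $L_a(0)=0$ and the Lyapunov conditions matching verbatim. The paper obtains the same completed-square identity by first verifying $H(x,\theta_\mathrm{s},u^\star,a^\star,V^{\prime\mathrm{T}}(x))=0$ and then expanding $H-H(u^\star,a^\star)$, which is only a cosmetic difference from your direct completion of the square.
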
  
\begin{proof}
The proof follows immediately from Theorem \ref{Thm: general Nash equilibrium} with
$\mathcal{D} = \mathbb{R}^n$,
$U = \mathbb{R}^{m_u}$, $A = \mathbb{R}^{m_a}$,
$\theta_\mathrm{s}=[ \theta_f^\mathrm{T},\ \theta_G ^\mathrm{T},\ \theta_K^\mathrm{T}]^\mathrm{T}$,
$F(x,\theta_\mathrm{s}, u,a) = f(x,\theta_f) + G(x,\theta_G) u+K(x,\theta_K)a$, and
$r(x, u, a) =  L(x) + L_u(x) u  + L_a(x) a+ u^{\rm T} R_u(x) u-a^{\rm T} R_a(x) a$. Specifically, the Hamiltonian  function becomes
\begin{gather}
H(x,\theta_\mathrm{s},u,a,V^{\prime \mathrm{T}}(x)) \nonumber \\=L(x)+L_{u}(x) u+L_{a}(x)a+u^{\mathrm{T}} R_u(x) u -a^{\mathrm{T}} R_a(x) a  \nonumber \\
+V^\prime(x)\Big (f(x,\theta_f)+G(x,\theta_G)u+K(x,\theta_K)a\Big),\nonumber\\ 
  (x,\theta_\mathrm{s},u,a)\in \mathcal{S}\times \mathbb{R}^{N_\mathrm{s} }  \times \mathbb{R}^{m_u}\times \mathbb{R}^{m_a}. \label{proof: Hamiltonian}
\end{gather} 
Substituting $u=u^\star(x,\theta_\mathrm{c} )$ and $a=a^\star(x,\theta_\mathrm{a} )$, given by \eqref{eqn_control} and \eqref{eqn_adversary}, into the Hamiltonian function \eqref{proof: Hamiltonian}  yields \eqref{eqn: HJI}, and hence, 
\begin{align*}
    H(x,\theta_\mathrm{s},u^\star(x,\theta_\mathrm{c} ), a^\star(x,\theta_\mathrm{a} ),V^{\prime \mathrm{T}}(x))= 0, \quad  x \in \mathcal{S}, 
\end{align*}
which implies \eqref{thm H=0}.

Next, using \eqref{thm H=0}, the Hamiltonian function \eqref{proof: Hamiltonian} can be written as 
\begin{align*} 
 H(x,\theta_\mathrm{s},u,a,V^{\prime \mathrm{T}}(x))=& H(x,\theta_\mathrm{s},u,a,V^{\prime \mathrm{T}}(x)) \\
&-H(x,\theta_\mathrm{s},u^\star(x,\theta_\mathrm{c} ), a^\star(x,\theta_\mathrm{a} ),V^{\prime \mathrm{T}}(x)) \\
=& r(x, u, a)+V^{\prime}(x)\Big(f(x,\theta_f)+G(x,\theta_G) u  \\ & +K\left(x,\theta_K\right) a \Big)-r(x, u^\star(x,\theta_\mathrm{c}),a^\star(x,\theta_\mathrm{a} )) \\&
-V^{\prime}(x)\Big(f(x,\theta_f)+G(x,\theta_G) u^\star(x,\theta_\mathrm{c})\\& +K(x,\theta_K)a^\star(x,\theta_\mathrm{a})\Big) \\=&\Big( L_a(x)+V^{\prime}(x) K(x,\theta_K)\Big)\Big(a-a^\star(x,\theta_\mathrm{a})\Big)  \nonumber \\& -a^{\mathrm{T}} R_a(x) a+a^{\star\mathrm{T}}(x,\theta_\mathrm{c}) R_a(x) a^\star(x,\theta_\mathrm{a})\nonumber \\ & +\Big( L_u(x)+V^{\prime}(x) G(x,\theta_G)\Big)\Big(u-u^\star(x,\theta_\mathrm{c})\Big)  \nonumber \\&+u^{\mathrm{T}} R_u(x) u-u^{\star\mathrm{T}}(x,\theta_\mathrm{c}) R_u(x) u^\star(x,\theta_\mathrm{c}),
\end{align*}
which, using \eqref{eqn_control} and \eqref{eqn_adversary}, yields
\begin{align*}
 H(x,\theta_\mathrm{s},u,a,V^{\prime \mathrm{T}}(x)) \nonumber  
= &2 a^{\star\mathrm{T}}(x,\theta_\mathrm{a}) R_a(x)\Big(a-a^\star(x,\theta_\mathrm{a})\Big)  \nonumber \\& -a^{\mathrm{T}} R_a(x) a+a^{\star\mathrm{T}}(x,\theta_\mathrm{c}) R_a(x) a^\star(x,\theta_\mathrm{a}),\nonumber \\
&-2 u^{\star\mathrm{T}}(x,\theta_\mathrm{c}) R_u(x)\Big(u-u^\star(x,\theta_\mathrm{c})\Big) \nonumber \\
&+u^{\mathrm{T}} R_u(x) u\nonumber -u^{\star\mathrm{T}}(x,\theta_\mathrm{c}) R_u(x) u^\star(x,\theta_\mathrm{c})\\
=& -\Big(a-a^\star(x,\theta_\mathrm{a})\Big)^{\mathrm{T}} R_a(x)\Big(a-a^\star(x,\theta_\mathrm{a})\Big) \nonumber \\ &+\Big(u-u^\star(x,\theta_\mathrm{c})\Big)^{\mathrm{T}} R_u(x)\Big(u-u^\star(x,\theta_\mathrm{c})\Big).
\end{align*}
Now, since $R_a(x)\succ 0,\ x \in  \mathcal{S},$ and $R_u(x)\succ 0,\ x \in  \mathcal{S},$ it follows that 
\begin{align*}
H(x,\theta_\mathrm{s}, u^\star(x,\theta_\mathrm{c}), a,V^{\prime \mathrm{T}}(x)) &=-\Big(a-a^\star(x,\theta_\mathrm{a})\Big)^{\mathrm{T}} R_a(x)\Big(a-a^\star(x,\theta_\mathrm{a})\Big) \\
& \leq 0,\quad (x,a)\in \mathcal{S}  \times \mathbb{R}^{m_a},
\end{align*}
and
\begin{align*}
H(x,\theta_\mathrm{s},u, a^\star(x,\theta_\mathrm{a}),V^{\prime \mathrm{T}}(x)) &=\Big(u-u^\star(x,\theta_\mathrm{c}\Big)^{\mathrm{T}} R_u(x)\Big(u-u^\star(x,\theta_\mathrm{c})\Big) \\
& \geq 0,\quad (x,u)\in \mathcal{S}  \times \mathbb{R}^{m_u},
\end{align*}
which imply \eqref{thm H<0} and \eqref{thm H>0}.

Next, since $x=0$ is a local minimum of the continuously differentiable $V(\cdot)$, we have $ V^\prime(0) = 0$. Hence, it follows from \eqref{eqn_control} and \eqref{eqn_adversary} that $u^\star(0,\theta_\mathrm{c} ) = 0$ and $a^\star(0,\theta_\mathrm{a} ) = 0$, which verify \eqref{thm ustar} and \eqref{thm astar}. Finally, \eqref{eqn: lyap1}--\eqref{eqn: lyap4} are equivalent to
\eqref{eq:them 2 lyap1}--\eqref{eq:them 2 lyap4} with $u^\star(x,\theta_\mathrm{c})$ and $a^\star(x,\theta_\mathrm{a})$ given by \eqref{eqn_control} and \eqref{eqn_adversary}. Now, the result is an immediate consequence of Theorem \ref{Thm: general Nash equilibrium}.\frQED
\end{proof}

\begin{remark}
Since the Hamiltonian function \eqref{proof: Hamiltonian} is convex in $u$ and concave in $a$ for every $x \in \mathcal{S}$, we derive the saddle point feedback strategies \eqref{eqn_control} and \eqref{eqn_adversary} by setting $\frac{\partial H}{\partial u}=0$ and $\frac{\partial H}{\partial a}=0$.  \frqed
\end{remark}

The robust optimal safe predefined time stabilization problem is equivalent to solving the steady-state  HJI equation \eqref{eqn: HJI}, which is generally difficult to solve, except for special cases. To circumvent the complexity in solving the steady-state HJI equation, we consider a \textit{robust inverse optimal feedback control problem} \cite{freeman1996inverse,l2017differential,l2017differentialCTA}, wherein we
parameterize a class of safely predefined time stabilizing pairs of feedback strategies that constitute a saddle point of a \textit{derived} cost functional instead of seeking a \textit{given} cost functional  saddle point, thus providing flexibility in specifying the adversarially robust optimal controller. 

\begin{corollary}\label{theorem_inverse} 
Consider the parameter-dependent nonlinear affine dynamical
system \eqref{eq: affine system} with cost functional \eqref{eqn_generic_performance_measure}. Let the controller $u(\cdot)$ be a player that \textit{minimizes} the cost functional \eqref{eqn_generic_performance_measure} and let the adversary $a(\cdot)$ be a player that \textit{maximizes} the cost functional \eqref{eqn_generic_performance_measure}. Let $\mathcal{S} \subset  \mathbb{R}^{n}$ be a set of admissible states with $0 \in \mathcal{S}$ and let $T_{\mathrm{p}} >0$ be a predefined time. For every $ x_0 \in \mathcal{S}$, let $\mathcal{F}\left( x_0,\mathcal{S},T_{\mathrm{p}} \right)\subset \mathcal{U} \times \mathcal{D}$ be the set of safely predefined time stabilizing pairs of feedback strategies and suppose that $\mathcal{F}\left( x_0,\mathcal{S},T_{\mathrm{p}}\right)$ is nonempty.
Assume that there exist a continuously differentiable function $V : \mathcal{S} \to \mathbb{R},$ continuous functions $L_{u}: \mathcal{S} \rightarrow \mathbb{R}^{1 \times m_u}$ and $L_{a}: \mathcal{S} \rightarrow \mathbb{R}^{1 \times m_a}$, continuous positive-definite matrix functions $R_u: \mathcal{S} \rightarrow \mathbb{R}^{m_u \times m_u}$ and $R_a: \mathcal{S} \rightarrow \mathbb{R}^{m_a \times m_a}$, system parameter vectors $\theta_f\in \mathbb{R}^{N_f},\ \theta_G \in \mathbb{R}^{N_G },$ and $\theta_K \in \mathbb{R}^{N_K}$, and real numbers $\alpha,\ \beta,\ p,\ q,\ r>0$ such that $p r<1,\ q r>1,$ and \eqref{eqn: lyap1}-\eqref{eqn: lyap4} hold.
If  either $\mathcal{S}$ is bounded or both  $\mathcal{S}$ is unbounded and $V(\cdot)$ is coercive,  then with the feedback control strategy \eqref{eqn_control} and the feedback adversary strategy \eqref{eqn_adversary},
the zero solution $x(t) \equiv 0$ to \eqref{closed-loop}
is safely predefined-time stable with  predefined time $T_{\mathrm{p}}$ with respect to the set of admissible states $\mathcal{S}$, the cost functional \eqref{eqn_generic_performance_measure},
with
\begin{align} \label{eqn_optimal_control_affine_2_inverse}
L(x) = &u^{\star\rm T}(x,\theta_\mathrm{c}) R_u(x) u^\star(x,\theta_\mathrm{c})- V^\prime(x) f(x,\theta_f) 
-d^{\star\rm T}(x,\theta_\mathrm{a}) R_a(x) a^\star(x,\theta_\mathrm{a}),\quad  x \in \mathcal{S},
\end{align} 
has a saddle point in the sense of \eqref{eq:minimax}, and the Nash value is given by \eqref{eqn_cost_hjb}.
\end{corollary}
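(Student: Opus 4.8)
The plan is to obtain Corollary~\ref{theorem_inverse} as an immediate consequence of Corollary~\ref{Thm: affine Nash equilibrium}. The point is that, once $L_u$, $L_a$, $R_u$, $R_a$ are prescribed and $L(\cdot)$ is \emph{defined} by \eqref{eqn_optimal_control_affine_2_inverse}, the running cost \eqref{eq: performance integrand}---and hence the cost functional \eqref{eqn_generic_performance_measure}, equivalently \eqref{eq:performance measure}---is completely determined. It then suffices to show that this particular $L(\cdot)$ makes the steady-state HJI equation \eqref{eqn: HJI} hold on $\mathcal{S}$ and that the remaining standing hypotheses of Corollary~\ref{Thm: affine Nash equilibrium} are met; conditions \eqref{eqn: lyap1}--\eqref{eqn: lyap4} are assumed outright.

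The crux is the verification of \eqref{eqn: HJI}, which is the algebra in the proof of Corollary~\ref{Thm: affine Nash equilibrium} run in reverse. First I would substitute the closed forms \eqref{eqn_control} and \eqref{eqn_adversary} into the Hamiltonian \eqref{proof: Hamiltonian}. Using the identities $L_u(x)+V'(x)G(x,\theta_G) = -2u^{\star\mathrm{T}}(x,\theta_\mathrm{c})R_u(x)$ and $L_a(x)+V'(x)K(x,\theta_K) = 2a^{\star\mathrm{T}}(x,\theta_\mathrm{a})R_a(x)$, which are just \eqref{eqn_control} and \eqref{eqn_adversary} transposed (with $R_u$, $R_a$ symmetric), the cross terms collapse and one gets $H(x,\theta_\mathrm{s},u^\star,a^\star,V^{\prime \mathrm{T}}(x)) = L(x) + V'(x)f(x,\theta_f) - u^{\star\mathrm{T}}R_u u^\star + a^{\star\mathrm{T}}R_a a^\star$ for every $x\in\mathcal{S}$. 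Definition \eqref{eqn_optimal_control_affine_2_inverse} is exactly the choice of $L(x)$ that annihilates the right-hand side, so $H(x,\theta_\mathrm{s},u^\star(x,\theta_\mathrm{c}),a^\star(x,\theta_\mathrm{a}),V^{\prime \mathrm{T}}(x))=0$ on $\mathcal{S}$; expanding $u^\star$, $a^\star$ through \eqref{eqn_control}--\eqref{eqn_adversary} rewrites this identity verbatim as \eqref{eqn: HJI}. Equivalently, \eqref{eqn_optimal_control_affine_2_inverse} is \eqref{eqn: HJI} solved for $L$.

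Next I would check the regularity of the derived $L(\cdot)$ required by Corollary~\ref{Thm: affine Nash equilibrium}. Continuity of $L$ on $\mathcal{S}$ is clear from \eqref{eqn_optimal_control_affine_2_inverse} since $V\in C^1$ and $f(\cdot,\theta_f)$, $G(\cdot,\theta_G)$, $K(\cdot,\theta_K)$, $R_u$, $R_a$, $L_u$, $L_a$ are continuous. For $L(0)=0$: by \eqref{eqn: lyap1}--\eqref{eqn: lyap2} the origin is a local minimizer of the $C^1$ function $V$, so $V'(0)=0$; then \eqref{eqn_control}--\eqref{eqn_adversary} give $u^\star(0,\theta_\mathrm{c})=0$ and $a^\star(0,\theta_\mathrm{a})=0$ (using $L_u(0)=0$, $L_a(0)=0$ from the running-cost structure \eqref{eq: performance integrand}), and since $f(0,\theta_f)=0$, \eqref{eqn_optimal_control_affine_2_inverse} gives $L(0)=0$. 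With $R_u(x)\succ 0$, $R_a(x)\succ 0$ assumed, the boundedness/coercivity dichotomy on $\mathcal{S}$ assumed, and \eqref{eqn: HJI}, \eqref{eqn: lyap1}--\eqref{eqn: lyap4} all in force, Corollary~\ref{Thm: affine Nash equilibrium} applies and yields safe predefined-time stability of \eqref{closed-loop}, the saddle-point identities \eqref{eq:minimax}, and the Nash value \eqref{eqn_cost_hjb}.

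I do not expect a genuine difficulty here: unlike the forward problem, the predefined-time Lyapunov inequality \eqref{eqn: lyap4} is hypothesized rather than constructed, so the proof is a substitution-and-bookkeeping exercise. The only delicate point is the sign bookkeeping in the Hamiltonian---especially the $-a^{\mathrm{T}}R_a(x)a$ term and the transpose/symmetry manipulations of $R_u$, $R_a$---so that \eqref{eqn_optimal_control_affine_2_inverse} reproduces \eqref{eqn: HJI} exactly. It is also worth flagging that the $L(\cdot)$ produced by \eqref{eqn_optimal_control_affine_2_inverse} need not be sign-definite; this is precisely what makes the result an \emph{inverse} optimality statement---the derived cost is guaranteed to admit the stated saddle point, but no positivity of the state penalty $L$ is claimed.
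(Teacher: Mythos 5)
Your proposal is correct and follows exactly the route the paper intends: the paper omits this proof as "similar to the proof of Corollary \ref{Thm: affine Nash equilibrium}," and your verification that the derived $L(\cdot)$ in \eqref{eqn_optimal_control_affine_2_inverse} is precisely the choice annihilating $H(x,\theta_\mathrm{s},u^\star,a^\star,V^{\prime\mathrm{T}}(x)) = L(x)+V'(x)f(x,\theta_f)-u^{\star\mathrm{T}}R_u u^\star+a^{\star\mathrm{T}}R_a a^\star$, i.e.\ the HJI equation \eqref{eqn: HJI} solved for $L$, is the correct way to fill in that omission. The sign bookkeeping and the regularity checks ($L$ continuous, $L(0)=0$ via $V'(0)=0$) are all accurate, and your remark that the derived $L$ need not be sign-definite is a fair observation about the inverse-optimal formulation.
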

\begin{proof}
The proof is similar to the proof of Corollary \ref{Thm: affine Nash equilibrium} and hence is omitted.\frQED
\end{proof}

\begin{remark}
The following observations are important. 

1) The function $L(x)$ in the running cost \eqref{eq: performance integrand} 
 given by \eqref{eqn_optimal_control_affine_2_inverse} explicitly depends on the nonlinear system dynamics, the safely predefined time stabilizing Lyapunov function of the closed-loop system, and the feedback Nash equilibrium of the two-player zero-sum game. Note that the coupling is introduced via the steady-state HJI equation \eqref{eqn: HJI}. 

2) In light of \eqref{eqn_control} and \eqref{eqn_adversary}, the vector functions $L_{u}(x)$ and $L_{a}(x)$ in the running cost \eqref{eq: performance  integrand}  provide flexibility in the controller and adversary synthesis since $L_{u}(x)$ and $L_{a}(x)$ are arbitrary functions of $x \in \mathcal{S}$ subject to condition \eqref{eqn: lyap4}, a stability condition in Theorem \ref{Thm: affine Nash equilibrium}. \frqed 
\end{remark}
In light of the above observations, note that by varying the parameters in the barrier Lyapunov function and the running cost, we can characterize a family of safely predefined-time stabilizing pairs of feedback strategies that can satisfy closed-loop system response requirements.

\section{PINNs for Robust Optimal Safe Control}\label{sec:PINN}

Unlike the robust \textit{inverse} optimal safe stabilization problem, for which the Nash value is known, the \textit{robust optimal safe predefined-time stabilization} problem is equivalent to solving the steady-state HJI equation \eqref{eqn: HJI} subject to the constraints \eqref{eqn: lyap1}--\eqref{eqn: lyap4}, which is generally difficult to solve, except for special cases. 

Building on the results of \cite{lu2021physics}, we design a \textit{physics-informed learning} framework to approximate the safely predefined-time stabilizing solution $V(\cdot)$ of the steady-state HJI equation \eqref{eqn: HJI}. Specifically, invoking the universal approximation property of deep
neural networks \cite{hornik1990universal}, we introduce a surrogate model $\hat{V}(\cdot,w)$ with $w\in\mathbb{R}^N$ representing the trainable model parameters. To ensure the satisfaction of the constraints \eqref{eqn: lyap1}--\eqref{eqn: lyap3}, let 
\begin{equation}\label{eq:nn_vhat}
    \hat{V}(x,w) = h({V_{\text{NN}}(x,w)})B(x), \quad (x,w) \in \mathcal{S} \times \mathbb{R}^N,
\end{equation}
where $h:\mathbb{R} \to (0, \infty)$ is a user-defined continuously differentiable function, $V_{\text{NN}}:\mathcal{S}\times \mathbb{R}^N\to\mathbb{R}$ is a standard fully-connected neural network, and $B:\mathcal{S}\to \mathbb{R}$ is a user-defined continuously differentiable function satisfying $B(0)=0$, $B(x)>0, \ x \in \mathcal{S} \backslash\{0\}$, and $B(x) \rightarrow \infty$ as $x \rightarrow \partial \mathcal{S}$. If $\mathcal{S}$ is unbounded, then $B(\cdot)$ is additionally coercive. The construction of $h(\cdot)$ and $B(\cdot)$ for each numerical example is provided in Section~\ref{sec:Simulation Results}.

To learn an approximation $\hat{V}(\cdot,w)$ of $V(\cdot)$, we randomly sample $M$ points in $\mathcal{S}$ and let $S_{\text{col}}  \triangleq \{x_1, \dots, x_M\} \subset \mathcal{S}$ denote the set of \textit{collocation points}. The learning procedure is formulated as a \textit{constrained} optimization problem, where, at the collocation points, the loss function $\mathcal{E}(\cdot)$ penalizes
violations of the steady-state HJI equation \eqref{eqn: HJI} and the constraint function $l(\cdot,\cdot)$ enforces the differential inequality \eqref{eqn: lyap4} to guarantee safe predefined-time stability. Specifically, the constrained optimization problem is of the form
\begin{gather}
    \min_{w \in \mathbb{R}^N} \mathcal{E} (w) \nonumber \\ 
    \text{subject to}\ l(x,w) \leq 0, \quad x \in S_{\text{col}},\label{contrained_opt}
\end{gather}
where the loss function $\mathcal{E}(\cdot)$ and the constraint function $l(\cdot,\cdot)$ are defined by 
\begin{align} \label{object}
\mathcal{E}(w) \triangleq &\sum_{x \in S_{\text{col}}} \Big| L(x) + \hat{V}_x(x,w)f(x,\theta_f) \nonumber \\ 
                          & - \frac{1}{4} \left( \hat{V}_x(x,w)G(x,\theta_G) + L_u(x) \right) 
                           R^{-1}_u(x) \left( \hat{V}_x(x,w)G(x,\theta_G) +L_u(x)  \right)^{\mathrm{T}} \nonumber \\ 
                          & + \frac{1}{4} \left( \hat{V}_x(x,w)K(x,\theta_K) + L_a(x) \right)
                           R^{-1}_a(x) \left( \hat{V}_x(x,w)K(x,\theta_K) +L_a(x)  \right)^{\mathrm{T}} \Big|^2, \nonumber \\ & \hspace{10.1 cm}
w \in \mathbb{R}^N,
\end{align} 
 and 
\begin{align} \label{constraint}
    l(x,w) \triangleq & ~ \hat{V}_x(x,w)\Big( f(x,\theta_f)  -\frac{1}{2} G(x,\theta_G)R^{-1}_u(x)L_u^T(x) \nonumber \\ 
               & +\frac{1}{2} K(x,\theta_K)R^{-1}_a(x)L_a^T(x) \nonumber\\ 
               & -\frac{1}{2} G(x,\theta_G)R^{-1}_u(x)G^T(x,\theta_G)\hat{V}^T_x(x,w) \nonumber \\ 
               & + \frac{1}{2} K(x,\theta_K)R^{-1}_a(x)K^T(x,\theta_K)\hat{V}^T_x(x,w) \Big) \nonumber \\ 
               & + \frac{\gamma}{T_p} \Big(\alpha\hat{V}^p(x,w) + \beta\hat{V}^q(x,w) \Big)^{r}, 
             \quad (x,w) \in \mathcal{S}_{\text{col}} \times \mathbb{R}^N.
    \end{align}
    The constrained optimization problem \eqref{contrained_opt} can be numerically addressed using the \textit{augmented Lagrangian method} (see, for example,~\cite{nocedal1999numerical} and \cite{bertsekas2014constrained}). 
In particular, the constrained optimization problem~\eqref{contrained_opt} is converted into a sequence of unconstrained optimization subproblems through an iterative procedure. 
In each iteration, the objective function of the unconstrained optimization subproblem is the sum of the loss function $\mathcal{E}(\cdot)$ and a penalty term for the constraints. 
Hence, the optimization subproblem at iteration $k$ takes the form
\begin{equation}\label{subprob_itek}
    \min_{w \in \mathbb{R}^N} \mathcal{E}_k(w), 
\end{equation}
where $\mathcal{E}_k(\cdot)$ is the loss function at iteration $k$ given by
\begin{align} \label{sub_itek}
\mathcal{E}_k(w)  \triangleq \mathcal{E}(w) + \sum_{x \in S_{\text{col}}} \Big(&\mu_{k-1} \mathbbm{1}_{\{l(x,w) \geq 0 \lor \lambda_{k-1}(x) >0\}} l^2(x,w) 
       + 
       \lambda_{k-1}(x) l(x,w)\Big),
\end{align}
with $\lor$ representing the or operator.

For every iteration $k \in \mathbb{Z}_{+}$, the Lagrangian multipliers $\mu_k$ and $\lambda_k(\cdot)$ in \eqref{sub_itek} are updated as 
\begin{align}
    \mu_k &= \delta \mu_{k-1}, \label{update_multiplier}\\ 
    \lambda_k(x) &= \max\{0,\lambda_{k-1}(x) + 2 \mu_{k-1}l(x,w) \} ,\quad x \in S_{\text{col}}, \label{update_multiplier2}
\end{align}
with $\delta,\ \mu_0 \in \mathbb{R}$ being tunable hyperparameters, and $\lambda_0(x)$ is initialized to be $0$ for every $x \in S_{\text{col}}$. The optimization subproblem~\eqref{subprob_itek} can be solved using modern gradient-based or (quasi-) Newton-based numerical solvers, such as Adam~\cite{kingma2014adam} or L-BFGS~\cite{zhu1997algorithm}. 

Let $w_k$ denote the minimizer of the optimization subproblem~\eqref{subprob_itek} at iteration $k$. Substituting $w_k$ into \eqref{eq:nn_vhat} yields $\hat{V}(\cdot,w_k)$, an approximation of the value function. 
Furthermore, substituting $\hat{V}(\cdot, w_k)$ into~\eqref{eqn_control} and~\eqref{eqn_adversary} yields the approximate Nash feedback control and adversary strategies $\hat{u}(\cdot,w_k)$ and $\hat{a}(\cdot,w_k)$, respectively. Algorithm~\ref{alg:hpinn} presents the pseudocode for the proposed adversarial physics-informed learning framework for robust optimal safe control, executed for a given number of iterations $\mathcal{K}$. The architecture of Algorithm~\ref{alg:hpinn} is shown in Fig. ~\ref{fig:sketch_pinn}.  
\begin{algorithm}
\caption{Training procedure of Adversarial PINN}
\hspace*{\algorithmicindent} \textbf{Hyperparameters}: $\alpha, \beta, \gamma, T_{\mathrm{p}}, p, q, r, \delta, \mu_0$\\
\hspace*{\algorithmicindent} \textbf{Input}: Collocation points $S_{\text{col}}\triangleq\{x_1, \dots, x_M \}\subset \mathcal{S}$\\
   \hspace*{\algorithmicindent} \textbf{Output}: Nash value $\hat{V}(\cdot, w_\mathcal{K})$ and Nash equilibrium $\left(\hat{u}(\cdot,w_\mathcal{K}),\hat{a}(\cdot,w_\mathcal{K})\right)$
\begin{algorithmic}[1]
\Procedure{}{}
   \State $\lambda_0 (x) \gets 0$ for every $x \in S_{\text{col}} $
   \State Initialize network parameters $w_0\in\mathbb{R}^N$
   \For{$k = 1 \cdots \mathcal{K}$}
   \State $\Tilde{\mathcal{E}} \gets \mathcal{E}(w_{k-1})$
   \Comment{\eqref{object}}
   \State $\Tilde{l}(x) \gets l(x,w_{k-1})$  
   \Comment{\eqref{constraint}}
 \State $\mathcal{E}_k \gets \Tilde{\mathcal{E}} + \sum_{x \in S_{\text{col}}} \left(\mu_{k-1} \, \mathbbm{1}_{\{\Tilde{l}(x) \geq 0 \lor \lambda_{k-1}(x) > 0\}} \, \Tilde{l}^2(x) + \lambda_{k-1}(x) \, \Tilde{l}(x)\right)
$
   \State $w_{k} \gets \argmin_w \mathcal{E}_k$
   \State $\mu_k \gets \delta \mu_{k-1}$
    \State $ \begin{aligned}[t] & \lambda_k(x) \gets \max\{0, \lambda_{k-1}(x) + 2\mu_{k-1} \Tilde{l}(x)\}\\ & \text{for every} ~ x \in S_{\text{col}} \end{aligned}$
   \EndFor
\EndProcedure
\end{algorithmic}
\label{alg:hpinn}
 \end{algorithm}
 \begin{figure}
     \centering
     \includegraphics[width=0.5\linewidth]{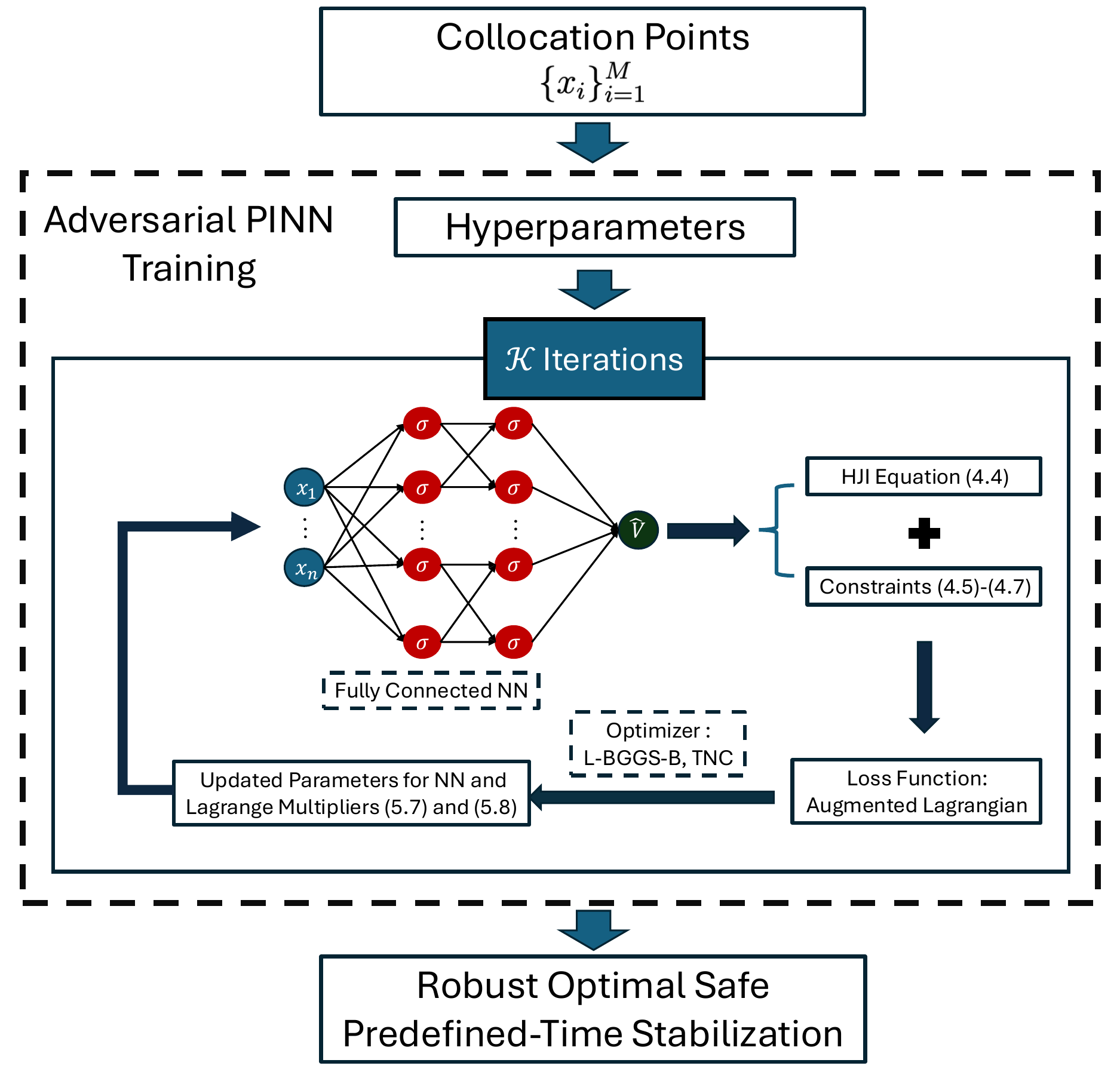}
     \caption{\small Adversarial physics-informed learning architecture to solve the robust optimal safe predefined-time stabilization problem. A set of collocation points $\left\{x_i\right\}_{i=1}^M$ is randomly sampled in $\mathcal{S}$. The training is formulated as a constrained optimization problem: at each collocation point, the loss function penalizes deviations from the HJI equation~\eqref{eqn: HJI}, while the constraint function enforces the differential inequality \eqref{eqn: lyap4} to ensure safe predefined-time stability. Constraints \eqref{eqn: lyap1}-\eqref{eqn: lyap3} are satisfied by the design of the PINN architecture. This constrained optimization problem is solved iteratively using the augmented Lagrangian method, with the Lagrange multipliers updated at each iteration according to \eqref{update_multiplier} and \eqref{update_multiplier2}. }
     \label{fig:sketch_pinn}
 \end{figure}

\begin{remark}
 Note that the steady-state HJI equation \eqref{eqn: HJI}  may have multiple solutions. However, the Nash value $V(\cdot)$ is the unique safely predefined-time stabilizing solution. Hence, when using PINNs to solve the robust optimal safe predefined-time stabilization problem, enforcing the stability conditions \eqref{eqn: lyap1}-\eqref{eqn: lyap4} is imperative. Conditions \eqref{eqn: lyap1}-\eqref{eqn: lyap3} are satisfied by the design of the PINN architecture, while \eqref{eqn: lyap4} is imposed as a constraint in the learning procedure. \frqed
\end{remark}

\section{Illustrative Numerical Examples} \label{sec:Simulation Results}
In this section, we present two numerical examples to illustrate the developed robust optimal safe predefined time control and physics-informed learning framework. 

Consider the nonlinear affine dynamical system given by
\begin{align}
  \dot{x}_1(t)=& - \min\{0, x_1(t)\}+u_1(t)+a_1(t),\quad  x_1(0)=x_{10},\quad  t \geq 0, \label{eq:lor 1}\\
 \dot{x}_2(t)=&- \max \{0, x_2(t)\}
 +u_2(t)+a_2(t),\quad x_2(0)=x_{20}. \label{eq:lor 2}
\end{align} Note that \eqref{eq:lor 1} and \eqref{eq:lor 2} can be cast in the form of  \eqref{eq: affine system} with $n=2,\ m_u=2, \ m_a=2,\ x=\left[x_{1},\ x_{2}\right]^{\mathrm{T}},\ u=\left[u_{1},\ u_{2}\right]^{\mathrm{T}},\ a=\left[a_{1},\ a_{2}\right]^{\mathrm{T}},$  
$
f(x,\theta_f)=-[
 \min \{0, x_1\},\
 \max \{0, $ $x_2\}
]^{\mathrm{T}},\
$
$ G(x,\theta_G)=I_{2},$ 
and
$
K(x,\theta_K)=I_{2}.
$

Let $s: \mathbb{R}^2 \rightarrow \mathbb{R}$ be a continuous function such that $s(0)>0$. Define the set of admissible states $\mathcal{S}$ as a zero-strict superlevel set of $s(\cdot)$ given by   
\begin{equation*} 
\mathcal{S} = \left\{x \in \mathbb{R}^2: s(x) >0 \right\}.
\end{equation*}
Note that $\mathcal{S}$ is an open set with $0 \in \mathcal{S}$.

Next, given the set of admissible states $\mathcal{S}$ and a predefined time $T_{\mathrm{p}}$, we use Corollary \ref{theorem_inverse} to synthesize an inverse safely predefined time stabilizing Nash equilibrium $\left(u^{\star}(\cdot), a^{\star}(\cdot)\right)$. 
In particular, we address the case of both a bounded and an unbounded set of admissible states.

\subsection{Bounded Safe Set}
Let $s(x)=\min\{s_1(x), s_2(x)\},\ x \in \mathbb{R}^2,$ where  $s_1(x) \triangleq 1-x_1^2,\ x \in \mathbb{R}^2,$ and $s_2(x) \triangleq 1-x_2^2,\ x \in \mathbb{R}^2$. Furthermore, let $V\left(x\right)=\frac{x_1^2}{2s_1(x)}+\frac{x_2^2}{2s_2(x)}, \ x \in \mathcal{S},$ be the Nash value. The terms of the running cost \eqref{eq: performance integrand} are given by
\begin{align*}
L(x)=& \frac{1}{2}\left( \frac{\left\lceil x_1\right\rfloor^{\gamma_1}}{s_1^{\frac{\gamma_1-1}{2}}(x)} + \frac{\left\lceil x_1\right\rfloor^{\gamma_2}}{s_1^{\frac{\gamma_2-1}{2}}(x)} \right)^2 
+\frac{1}{2}\left( \frac{\left\lceil x_2\right\rfloor^{\gamma_1}}{s_2^{\frac{\gamma_1-1}{2}}(x)} + \frac{\left\lceil x_2\right\rfloor^{\gamma_2}}{s_2^{\frac{\gamma_2-1}{2}}(x)} \right)^2 \\ &+\frac{x_1}{s_1(x)}\left(1+ \frac{x_{1}^{2}}{s_1(x)} \right) \min \{0, x_1\}
+\frac{x_2}{s_2(x)}\left( 1+ \frac{x_{2}^{2}}{s_2(x)}\right) \max \{0, x_2\},\quad x \in \mathcal{S},\\
L_{u}(x)=&L_{a}(x) = 
\begin{bmatrix}
\displaystyle \frac{\left\lceil x_1\right\rfloor^{\gamma_1}}{s_1^{\frac{\gamma_1-1}{2}}(x)} + \frac{\left\lceil x_1\right\rfloor^{\gamma_2}}{s_1^{\frac{\gamma_2-1}{2}}(x)} \\[2em]
\displaystyle \frac{\left\lceil x_2\right\rfloor^{\gamma_1}}{s_2^{\frac{\gamma_1-1}{2}}(x)} + \frac{\left\lceil x_2\right\rfloor^{\gamma_2}}{s_2^{\frac{\gamma_2-1}{2}}(x)}
\end{bmatrix}^{\mathrm{T}} \\ &  \\ & \hspace{1.1cm} - \left[\frac{x_1}{s_1(x)}\left( 1+ \frac{x_{1}^{2}}{s_1(x)}\right),\ \frac{x_2}{s_2(x)}\left( 1+ \frac{x_{2}^{2}}{s_2(x)}\right)\right], 
\quad x \in \mathcal{S},
\end{align*} 
$R_u(x)=\frac{1}{4}I_{2},\ x \in \mathcal{S}$, and $R_a(x)=\frac{1}{2}I_{2},\ x \in \mathcal{S}$, where $\gamma_1 \in (0,1)$ and $\gamma_2>1$. Hence, 
with $\theta_\mathrm{c}=\left[\gamma_1,\ \gamma_2 \right]^{\mathrm{T}}$ and $\theta_\mathrm{a}=\left[\gamma_1,\ \gamma_2 \right]^{\mathrm{T}}$, the inverse Nash equilibrium is given by
\begin{equation} \label{ex bounded: Nash control}
u^\star(x,\theta_\mathrm{c}) = -2 \begin{bmatrix}
\displaystyle \frac{\left\lceil x_1\right\rfloor^{\gamma_1}}{s_1^{\frac{\gamma_1-1}{2}}(x)} + \frac{\left\lceil x_1\right\rfloor^{\gamma_2}}{s_1^{\frac{\gamma_2-1}{2}}(x)} \\[2em]
\displaystyle \frac{\left\lceil x_2\right\rfloor^{\gamma_1}}{s_2^{\frac{\gamma_1-1}{2}}(x)} + \frac{\left\lceil x_2\right\rfloor^{\gamma_2}}{s_2^{\frac{\gamma_2-1}{2}}(x)}
\end{bmatrix}, \quad x \in \mathcal{S},
\end{equation}
and
\begin{equation} \label{ex bounded: Nash adversary}
 a^\star(x,\theta_\mathrm{a}) =  \begin{bmatrix}
\displaystyle \frac{\left\lceil x_1\right\rfloor^{\gamma_1}}{s_1^{\frac{\gamma_1-1}{2}}(x)} + \frac{\left\lceil x_1\right\rfloor^{\gamma_2}}{s_1^{\frac{\gamma_2-1}{2}}(x)} \\[2em]
\displaystyle \frac{\left\lceil x_2\right\rfloor^{\gamma_1}}{s_2^{\frac{\gamma_1-1}{2}}(x)} + \frac{\left\lceil x_2\right\rfloor^{\gamma_2}}{s_2^{\frac{\gamma_2-1}{2}}(x)}
\end{bmatrix}, \quad x \in \mathcal{S}.
\end{equation}
Next, computing the time derivative of $V(x)$ along the trajectories of \eqref{eq:lor 1} and \eqref{eq:lor 2} with $u=u^\star(x,\theta_\mathrm{c})$ and $a=a^\star(x,\theta_\mathrm{a})$ given by \eqref{ex bounded: Nash control} and \eqref{ex bounded: Nash adversary}, we obtain
\begin{align*}
\dot{V}(x)= &-\left( \frac{|x_{1}|^{\gamma_1+1}}{s_1^{\frac{\gamma_1+1}{2}}(x)}  +   \frac{|x_{1}|^{\gamma_2+1}}{s_1^{\frac{\gamma_2+1}{2}}(x)}\right)\nonumber 
-\left( \frac{|x_{2}|^{\gamma_1+1}}{s_2^{\frac{\gamma_1+1}{2}}(x)}  +   \frac{|x_{2}|^{\gamma_2+1}}{s_2^{\frac{\gamma_2+1}{2}}(x)}\right)\nonumber \\  & 
-\frac{x_1}{s_1(x)} \min \{0, x_1\}-\frac{x_2}{s_2(x)} \max \{0, x_2\}                       
\nonumber \\  & -\frac{x_1^2}{ s_1^2(x)} \left( \min \{0, x_1\} x_1 
+\frac{|x_{1}|^{\gamma_1+1}}{s_1^{\frac{\gamma_1-1}{2}}(x)}  +   \frac{|x_{1}|^{\gamma_2+1}}{s_1^{\frac{\gamma_2-1}{2}}(x)}\right) \nonumber \\  & -\frac{x_2^2}{ s_2^2(x)} \left( \max \{0, x_2\} x_2 
+\frac{|x_{2}|^{\gamma_1+1}}{s_2^{\frac{\gamma_1-1}{2}}(x)}  +   \frac{|x_{2}|^{\gamma_2+1}}{s_2^{\frac{\gamma_2-1}{2}}(x)}\right) \nonumber
\\ \leq &-\left( \frac{|x_{1}|^{\gamma_1+1}}{s_1^{\frac{\gamma_1+1}{2}}(x)} +
\frac{|x_{2}|^{\gamma_1+1}}{s_2^{\frac{\gamma_1+1}{2}}(x)} \right)\nonumber 
-\left(\frac{|x_{1}|^{\gamma_2+1}}{s_1^{\frac{\gamma_2+1}{2}}(x)}+\frac{|x_{2}|^{\gamma_2+1}}{s_2^{\frac{\gamma_2+1}{2}}(x)}\right),\quad x \in \mathcal{S}.
\end{align*}
Now, using $|y+\bar{y}|^{\delta_1} \leq|y|^{\delta_1}+|\bar{y}|^{\delta_1}$
for every $y,\ \bar{y} \in \mathbb{R}$ and $\delta_1 \in(0,1)$,\\  and $|y+\bar{y}|^{\delta_{2}} \leq 2^{\delta_{2}-1}\left(|y|^{\delta_{2}}+|\bar{y}|^{\delta_{2}}\right)$
 for every $y,\ \bar{y} \in \mathbb{R}$ and $\delta_2>1$ \cite{mitrinovic1970analytic}, and letting $y=\frac{x_1^2}{2s_1(x)},\ x \in \mathcal{S},\ \delta_1={\frac{\gamma_1+1}{2}}, \ \bar{y}=\frac{x_2^2}{2s_2(x)},\ x \in \mathcal{S}$, and $\delta_2={\frac{\gamma_2+1}{2}}$, it follows that
\begin{align*} 
\dot{V}(x) \leq  - 2^{\frac{\gamma_1+1}{2}} V^{\frac{\gamma_1+1}{2}}(x)-2 V^{\frac{\gamma_2+1}{2}}(x), \quad x \in \mathcal{S},
\end{align*}
which implies that \eqref{eqn: lyap4} holds with $\gamma=T_{\mathrm{p}}$, $\alpha=2^{\frac{\gamma_1+1}{2}} ,\ \beta=2,\ p={\frac{\gamma_1+1}{2}},\ q=\frac{\gamma_2+1}{2}$, and $r=1$. Hence, by Corollary \ref{theorem_inverse}, the zero solution $x(t) \equiv 0$ of the closed-loop system is safely predefined-time stable.

Let $T_p = 3.4295$ so that $\gamma_1 = 0.5$ and $\gamma_2=1.5$.  
We use Algorithm~\ref{alg:hpinn} to learn the Nash value $V(\cdot)$, the Nash feedback control strategy $u^\star(\cdot,\theta_\mathrm{c})$, and the Nash feedback adversary strategy $a^\star(\cdot,\theta_\mathrm{a})$. Specifically, for the PINN surrogate $\hat{V}(x,w)$ defined by \eqref{eq:nn_vhat}, we set $h(x) = e^x, \ x\in \mathbb{R}$, and $B(x) = \frac{x_1^2}{1 - |x_1|} + \frac{x_2^2}{1 - |x_2|}, \ x \in \mathcal{S}$. 
The neural network $V_{\text{NN}}(\cdot,\cdot)$ is implemented as a fully connected neural network with six hidden layers, each comprising $100$ neurons, and employs the hyperbolic tangent activation function $\tanh(\cdot)$.  The number of collocation points is set to $M=10^4$. The hyperparameters associated with the augmented Lagrangian method are selected as $\mu_0 = 10^{-4}$ and $\delta = 2$. At each iteration of Algorithm \ref{alg:hpinn}, the optimization subproblem \eqref{subprob_itek} is solved using the L-BFGS optimizer. 

\begin{figure}[!ht] 
\centering\includegraphics[width=0.7\textwidth]{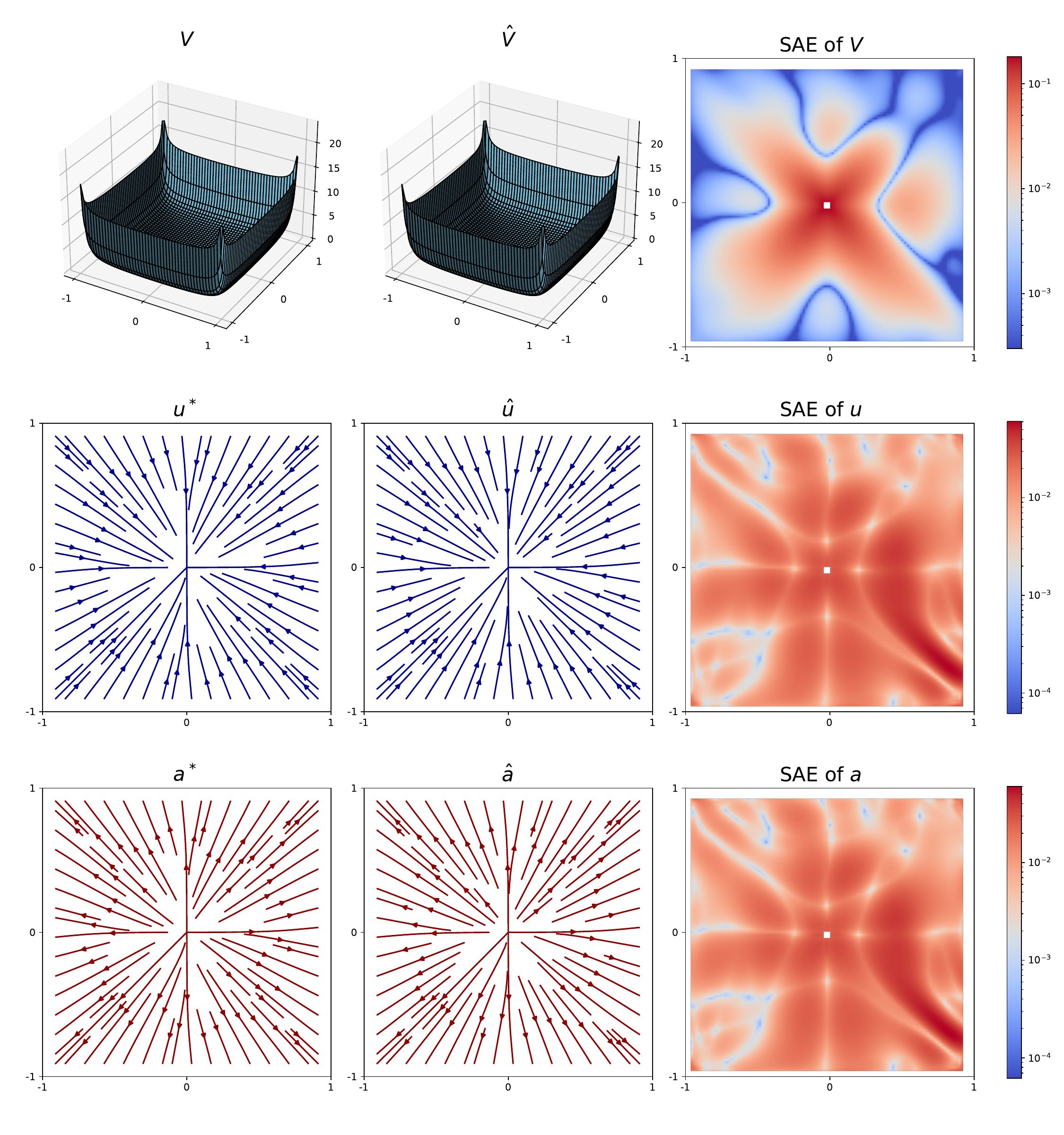}
\caption{\small Nash value, Nash feedback control strategy, and Nash feedback adversary strategy for the bounded safe set. 
(\textbf{Left}) Exact Nash value $V$, exact Nash control strategy $u^\star$, and exact Nash adversary strategy $a^\star$. 
(\textbf{Middle}) Approximate Nash value $\hat{V}$, approximate Nash control strategy $\hat{u}$, and approximate Nash adversary strategy $\hat{a}$. 
(\textbf{Right}) Symmetric absolute error for learning the Nash value, the Nash control strategy, and the Nash adversary strategy.}

\label{figure:bounded pinn appro}
\end{figure}
\begin{figure}[!ht]
\centering\includegraphics[width=0.7\textwidth]{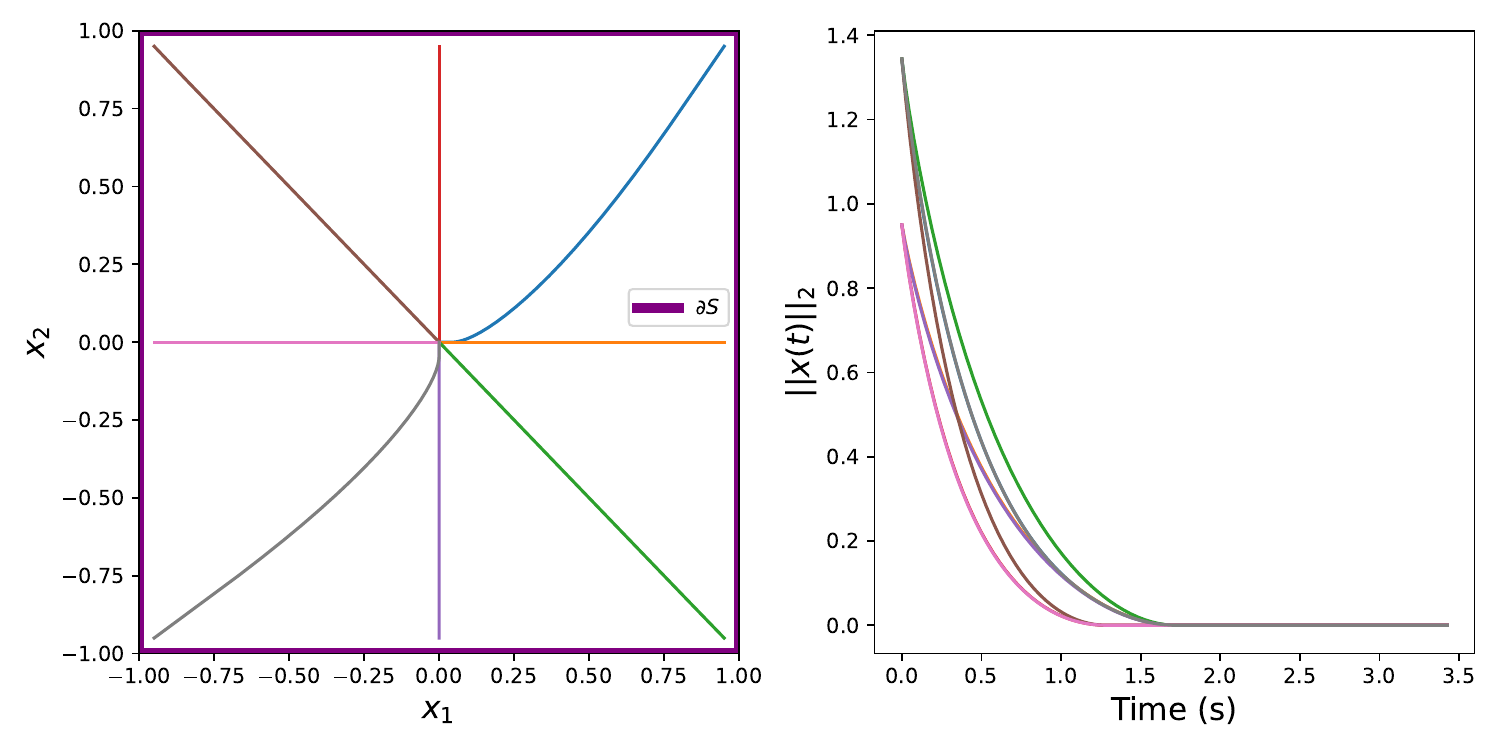}
\caption{\small Robust optimal safe predefined-time stabilization for the bounded safe set. (\textbf{Left}) Closed-loop state trajectories $x(t), \ t \geq 0$, starting from different initial conditions near the boundary $\partial \mathcal{S}$ of the safe set. (\textbf{Right}) Time evolution of the Euclidean norm $\|x(t)\|_2, \ t \geq 0,$ of each trajectory shown in the left plot. In both plots, trajectories starting from the same initial condition are marked with the same color.}
\label{figure:bounded pinn traj}
\end{figure}

Fig.~\ref{figure:bounded pinn appro} shows the approximate Nash value $\hat{V}$, the approximate Nash control strategy $\hat{u}$, and the approximate Nash adversary strategy $\hat{a}$ obtained from Algorithm~\ref{alg:hpinn}, along with the exact Nash value $V$, the exact Nash control strategy $u^\star$, and the exact Nash adversary strategy $a^\star$. To evaluate the approximation accuracy of our algorithm, we define, for every $x \in S_{\text{col}}$, the symmetric absolute error (SAE) for the approximate Nash value $\hat{V}$, the approximate Nash control strategy $\hat{u}$, and the approximate Nash adversary strategy $\hat{a}$ as
\begin{align*}
\text{SAE}\left(\hat{V}(x, w_\mathcal{K}),V(x)\right) & \triangleq \frac{|\hat{V}(x, w_\mathcal{K}) - V(x)|}{|\hat{V}(x,w_\mathcal{K})| + |V(x)|}, \\
   \text{SAE}\left(\hat{u}(x, w_\mathcal{K}),u^\star(x,\theta_\mathrm{c})\right) & \triangleq \frac{\|\hat{u}(x, w_\mathcal{K}) -u^\star(x,\theta_\mathrm{c}) \|_1}{\|\hat{u}(x, w_\mathcal{K})\|_1 + \|u^\star(x,\theta_\mathrm{c}) \|_1} \ , \\ 
\end{align*}
and
\begin{equation*}
    \text{SAE}\left(\hat{a}(x, w_\mathcal{K}), a^\star(x,\theta_\mathrm{a})\right) \triangleq \frac{\|\hat{a}(x, w_\mathcal{K}) -  a^\star(x,\theta_\mathrm{a}) \|_1}{\|\hat{a}(x, w_\mathcal{K})\|_1 + \| a^\star(x,\theta_\mathrm{a}) \|_1}.
\end{equation*} 
As shown in Fig. \ref{figure:bounded pinn appro}, the SAE between the exact and the approximate solution to the game learned by Algorithm \ref{alg:hpinn} is low, verifying the algorithm's efficacy. The left plot of Fig.~\ref{figure:bounded pinn traj} shows the closed-loop state trajectories starting from different initial conditions near the boundary $\partial \mathcal{S}$ of the safe set $\mathcal{S}$. Note that all trajectories remain within $\mathcal{S}$ and converge to the origin. The right plot of Fig.~\ref{figure:bounded pinn traj} shows the time evolution of the Euclidean norm of each trajectory. Note the predefined time convergence to the origin, which confirms the settling-time function bound $T\left(x_0,\theta_\mathrm{c},\theta_\mathrm{a} \right) \leq 3.4259,\ x_0 \in \mathcal{S}$. 

\subsection{Unbounded Safe Set}

 Let $s(x) = 1-x_2^2,\ x \in \mathbb{R}^2,$ and let $V\left(x\right)=\frac{\|x\|_2^2}{2s(x)}, \ x \in \mathcal{S},$ be the Nash value, whereas the terms composing the running cost \eqref{eq: performance integrand} are chosen as
\begin{align*}
L(x)=&\ \frac{1}{2}\left\| \frac{\left\lceil x\right\rfloor^{\gamma_1}}{s^{\frac{\gamma_1-1}{2}}(x)} + \frac{\left\lceil x\right\rfloor^{\gamma_2}}{s^{\frac{\gamma_2-1}{2}}(x)} \right \|_2^2  \\ &+\frac{x_1}{s(x)} \min \{0, x_1\}
+\frac{x_2}{s(x)}\left( 1+ \frac{\|x\|_{2}^{2}}{s(x)}\right) \max \{0, x_2\},\quad x \in \mathcal{S},\\
L_{u}(x)=&L_{a}(x)\\ =&\left( \frac{\left\lceil x\right\rfloor^{\gamma_1}}{s^{\frac{\gamma_1-1}{2}}(x)} + \frac{\left\lceil x\right\rfloor^{\gamma_2}}{s^{\frac{\gamma_2-1}{2}}(x)} \right)^{\mathrm{T}}  
- \left[ \frac{x_1}{s(x)},\ \frac{x_2}{s(x)}\left( 1+ \frac{\|x\|_{2}^{2}}{s(x)}\right)\right],\quad x \in \mathcal{S},
\end{align*}      
$R_u(x)=\frac{1}{4}I_{2},\ x \in \mathcal{S}$, and $R_a(x)=\frac{1}{2}I_{2},\ x \in \mathcal{S}$, where $\gamma_1 \in (0,1)$ and $\gamma_2>1$. Hence, 
with $\theta_\mathrm{c}=\left[\gamma_1,\ \gamma_2 \right]^{\mathrm{T}}$ and $\theta_\mathrm{a}=\left[\gamma_1,\ \gamma_2 \right]^{\mathrm{T}}$, the inverse Nash equilibrium is given by
\begin{align} \label{eq: Nash control}
 u^\star(x,\theta_\mathrm{c})=-2\left( \frac{\left\lceil x\right\rfloor^{\gamma_1}}{s^{\frac{\gamma_1-1}{2}}(x)} + \frac{\left\lceil x\right\rfloor^{\gamma_2}}{s^{\frac{\gamma_2-1}{2}}(x)} \right),\quad x \in \mathcal{S},
\end{align}
and
\begin{align} \label{eq: Nash adversary}
 a^\star(x,\theta_\mathrm{a})=\left( \frac{\left\lceil x\right\rfloor^{\gamma_1}}{s^{\frac{\gamma_1-1}{2}}(x)} + \frac{\left\lceil x\right\rfloor^{\gamma_2}}{s^{\frac{\gamma_2-1}{2}}(x)} \right),\quad x \in \mathcal{S}.
\end{align}
Next, evaluating the time derivative of $V(x)$ along the trajectories of \eqref{eq:lor 1} and \eqref{eq:lor 2} with $u=u^\star(x,\theta_\mathrm{c})$ and $a=a^\star(x,\theta_\mathrm{a})$ given by \eqref{eq: Nash control} and \eqref{eq: Nash adversary} yields
\begin{align} \label{eq: example unbounded vdot} 
\dot{V}(x)= &-\left( \frac{\|x\|_{\gamma_1+1}^{\gamma_1+1}}{s^{\frac{\gamma_1+1}{2}}(x)}  +   \frac{\|x\|_{\gamma_2+1}^{\gamma_2+1}}{s^{\frac{\gamma_2+1}{2}}(x)}\right)\nonumber 
-\frac{1}{s(x)}\left( x_1\min \{0, x_1\}+x_2 \max \{0, x_2\} \right)                       
\nonumber \\  & -\frac{\|x\|_2^2}{ s^2(x)} \Big( \max \{0, x_2\} x_2 
+\frac{|x_{2}|^{\gamma_1+1}}{s^{\frac{\gamma_1-1}{2}}(x)}  +   \frac{|x_{2}|^{\gamma_2+1}}{s^{\frac{\gamma_2-1}{2}}(x)}\Big) \nonumber
\\ \leq & -\left( \frac{\|x\|_{\gamma_1+1}^{\gamma_1+1}}{s^{\frac{\gamma_1+1}{2}}(x)}  +   \frac{\|x\|_{\gamma_2+1}^{\gamma_2+1}}{s^{\frac{\gamma_2+1}{2}}(x)}\right),\quad x \in \mathcal{S}.
\end{align}
Now, by the monotonicity property of H\"{o}lder norms \cite{bernstein2018scalar}, we have $\left\| x\right\|_2 \leq\left\| x \right\|_{\gamma_1+1}$ since $2>\gamma_1+1>0$. Moreover, by the equivalence of vector norms on $\mathbb{R}^{n}$ \cite{bernstein2018scalar}, it follows that $\left\| x \right\|_2 \leq 2^{\frac{\gamma_2-1}{2\left(\gamma_2+1\right)}} \left\|x \right\|_{\gamma_2+1}$ since $2<\gamma_2+1$, and hence, \eqref{eq: example unbounded vdot} can be further bounded as
\begin{align*}
\dot{V}(x) \leq  - 2^{\frac{\gamma_1+1}{2}} V^{\frac{\gamma_1+1}{2}}(x)-2 V^{\frac{\gamma_2+1}{2}}(x), \quad x \in \mathcal{S},
\end{align*}
which implies that \eqref{eqn: lyap4} is satisfied with $\gamma=T_{\mathrm{p}}$, $\alpha=2^{\frac{\gamma_1+1}{2}} ,\ \beta=2,\ p={\frac{\gamma_1+1}{2}},\ q=\frac{\gamma_2+1}{2}$, and $r=1$, and hence, by Corollary \ref{theorem_inverse}, the zero solution $x(t) \equiv 0$ of the closed-loop system is safely predefined-time stable.

Let $T_{\mathrm{p}}=3.4259$ so that $ \gamma_1=0.5$ and $\gamma_2=1.5$. 
For our PINN architecture, we let $h(x) = \frac{1}{1 + e^{-x}}, \ x\in \mathbb{R}$, which is the sigmoid function, and $B(x) = \frac{\|x\|^2_2}{\sqrt{2} - \sqrt{x_2^2+1}},\ x\in \mathcal{S}$. The neural network $V_{\text{NN}}(\cdot,\cdot)$ is implemented as a fully connected neural network with six hidden layers, each comprising $50$ neurons, using the sigmoid activation function. The number of collocation points is set to $M=10^4$. 
For the augmented Lagrangian method, the hyperparameters are set to $\mu_0 = 10^{-3}$ and $\delta = 1.5$. 
At every iteration of Algorithm \ref{alg:hpinn}, the optimization subproblem \eqref{subprob_itek} is solved using the truncated Newton conjugate gradient (TNCG) optimizer (see, for example, \cite{kelley1999iterative} and \cite{bonnans2006numerical}). 

\begin{figure}[!ht] 
\centering\includegraphics[width=0.7\textwidth]{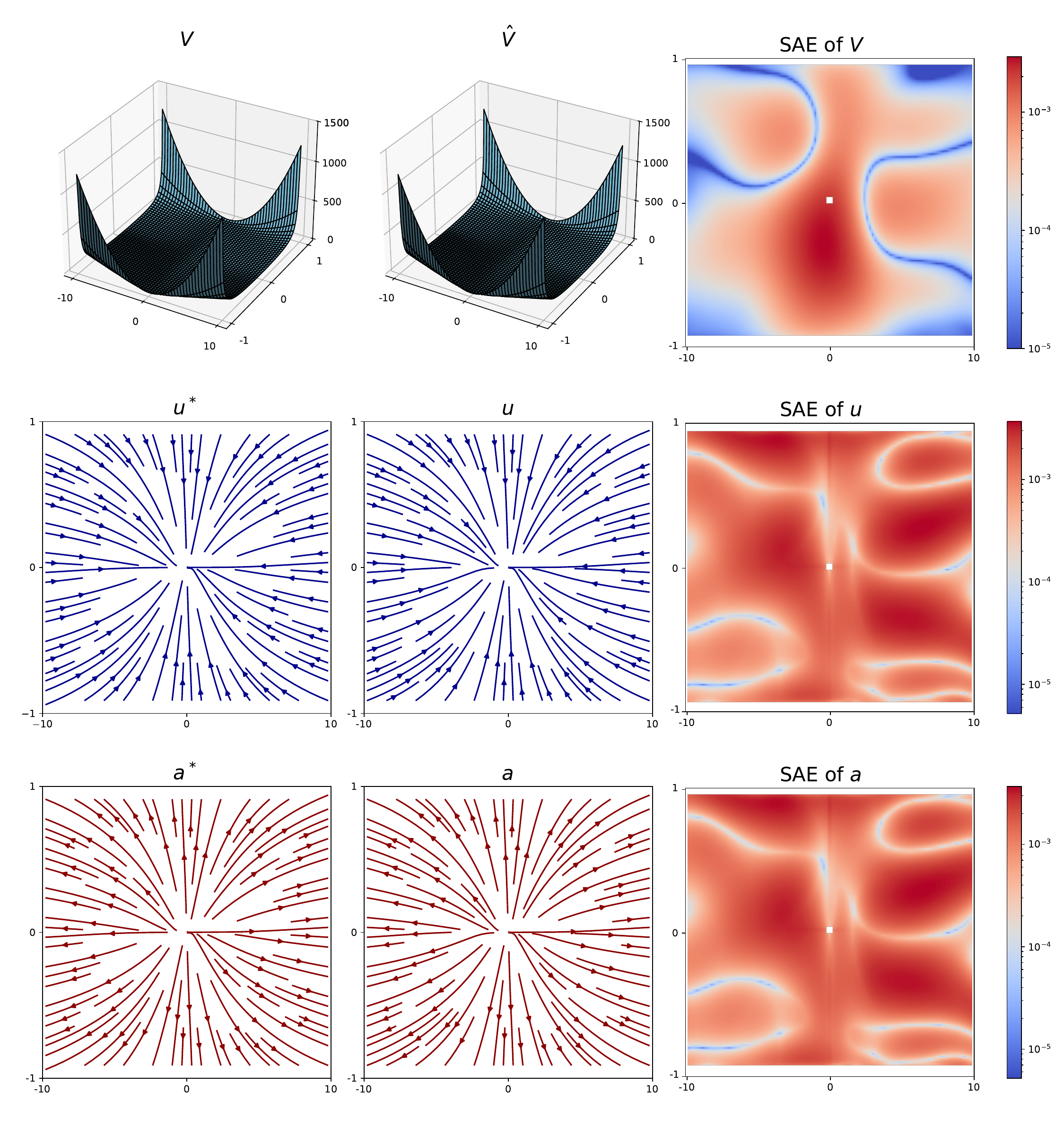}
\caption{\small Nash value, Nash feedback control strategy, and Nash feedback adversary strategy for the unbounded safe set. 
(\textbf{Left}) Exact Nash value $V$, exact Nash control strategy $u^\star$, and exact Nash adversary strategy $a^\star$. 
(\textbf{Middle}) Approximate Nash value $\hat{V}$, approximate Nash control strategy $\hat{u}$, and approximate Nash adversary strategy $\hat{a}$. 
(\textbf{Right}) Symmetric absolute error for learning the Nash value, the Nash control strategy, and the Nash adversary strategy.}

\label{figure:unbounded pinn appro}
\end{figure}
\begin{figure}[!ht]
\centering\includegraphics[width=0.7\textwidth]{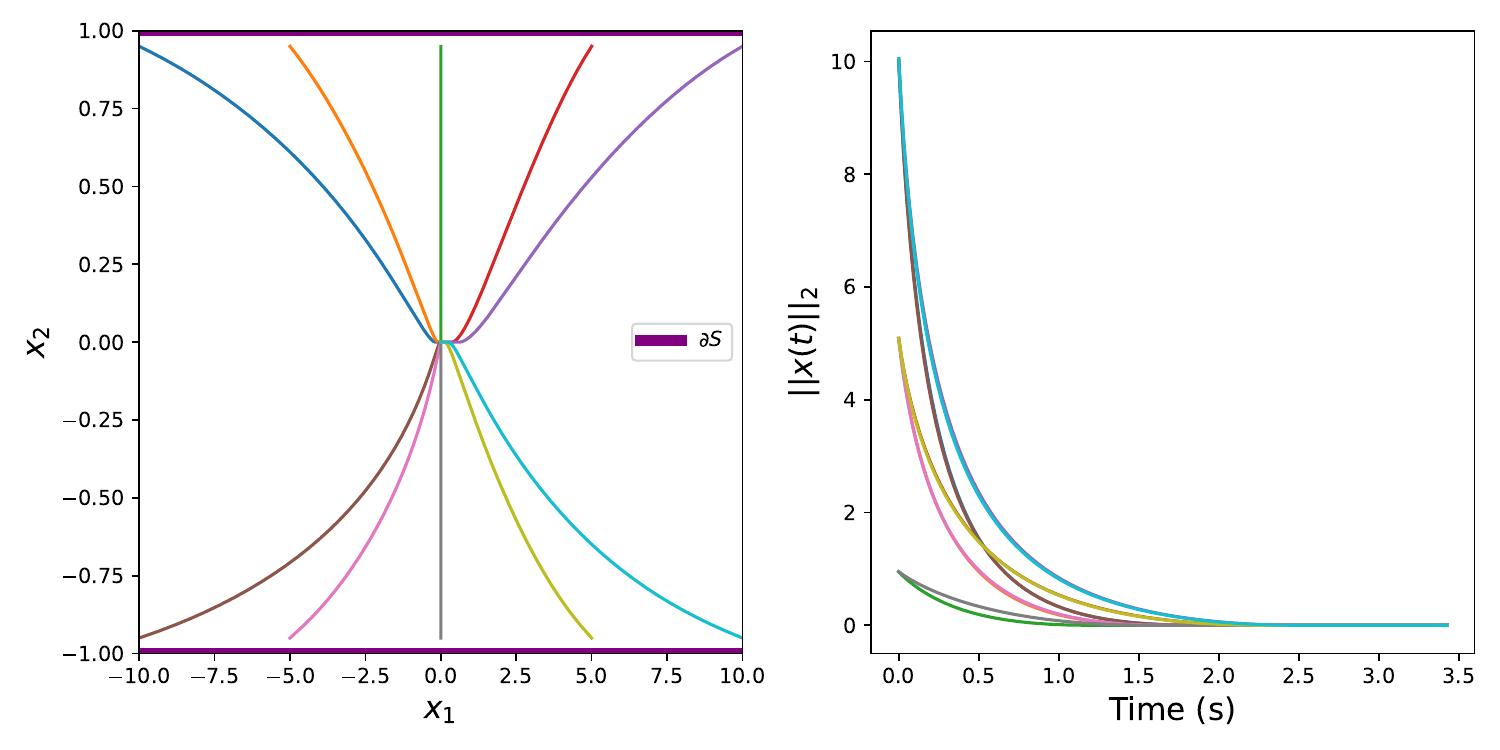}
\caption{\small Robust optimal safe predefined-time stabilization for the unbounded safe set. (\textbf{Left}) Closed-loop state trajectories $x(t),\ t \geq 0$, starting from different initial conditions in the safe set $\mathcal{S}$. (\textbf{Right}) Time evolution of the Euclidean norm $\|x(t)\|_2,\ t \geq 0,$ of each trajectory shown in the left plot. In both plots, trajectories starting from the same initial condition are marked with the same color.}
\label{figure:unbounded pinn traj}
 \end{figure}

Fig.~\ref{figure:unbounded pinn appro} shows the approximate Nash value $\hat{V}$, the approximate Nash control strategy $\hat{u}$, and the approximate Nash adversary strategy $\hat{a}$, along with the exact Nash value $V$, the exact Nash control strategy $u^\star$, and the exact Nash adversary strategy $a^\star$. Note that Algorithm~\ref{alg:hpinn} exhibits a low SAE. The left plot of Fig.~\ref{figure:unbounded pinn traj} shows the closed-loop state trajectories starting from different initial conditions within the safe set $\mathcal{S}$. Note that all trajectories remain within $\mathcal{S}$ and converge to the origin. The right plot of Fig.~\ref{figure:unbounded pinn traj}
 shows the time evolution of the Euclidean norm of each trajectory. Note the predefined time convergence to the origin, which verifies the settling-time function bound $T\left(x_0,\theta_\mathrm{c},\theta_\mathrm{a} \right) \leq 3.4259,\ x_0 \in \mathcal{S}$. 

\section{Conclusion}\label{sec:Future work}

In this paper, an adversarially robust optimal safe predefined-time stabilization problem is stated and shown to correspond to a two-player zero-sum differential game. Sufficient conditions are derived to characterize a safely predefined-time stabilizing saddle-point solution to the differential game. Specifically, safe prede-\ fined-time stability of the closed-loop system and Nash equilibrium are established via a barrier Lyapunov function that satisfies a certain differential inequality and the steady-state HJI equation. Given the intractability of the latter, an adversarial physics-informed learning algorithm is developed to learn the safely predefined-time stabilizing solution to the steady-state HJI equation. Future research will explore discrete-time extensions of the proposed framework.

\bibliographystyle{siamplain}
\bibliography{coordination,proj_bib,multi_uav,references}

\end{document}